 \theoremstyle{plain}
 \newtheorem{theorem}{Theorem}
 \newtheorem{lemma}{Lemma}
 \newtheorem{proposition}{Proposition}
 \theoremstyle{definition}
 \DeclareMathOperator*{\argmin}{argmin}
\newcommand{\pa}{\Psi_{\alpha}}
\newcommand{\pp}[1]{\Psi_{#1}}
\newcommand{\epss}{\epsilon_2}
 \title{Heuristic Parameter Choice Rules for Tikhonov Regularisation with Weakly Bounded Noise}
 \author{Stefan Kindermann\thanks{Industrial Mathematics Institute, Johannes Kepler University Linz, Altenbergerstra{\ss}e 69, 4040, Linz, Austria. (kindermann@indmath.uni-linz.ac.at \& kemal.raik@indmath.uni-linz.ac.at). This work was supported by the Austrian Science
 Fund (FWF) project P 30157-N31. }, Kemal Raik\footnotemark[1]}
\begin{document}
 	\maketitle
 	\begin{abstract}
 		We study the choice of the regularisation parameter for linear ill-posed problems in the presence of noise that is possibly unbounded but only finite in a weaker norm, 
 		and when the noise-level is unknown. For this task, we analyse  several 
 		heuristic parameter choice rules, such as  the quasi-optimality, 
 		heuristic discrepancy, and Hanke-Raus rules and adapt the latter two to 
 		the weakly bounded noise case.  We prove convergence and convergence rates under 
 		certain noise conditions. Moreover, we analyse  and provide conditions for  the convergence of 
 		the parameter choice by the generalised cross-validation and predictive mean-square error rules. 
 	\end{abstract}
 	\section{Introduction}	
 	Let $X$ and $Y$ be Hilbert spaces and $T:X\to Y$ be a compact linear operator.
 	We consider the ill-posed problem
 	\begin{equation}
 		Tx=y, \label{illposedproblem}
 	\end{equation}
 	in which $T$ may have a nontrivial kernel and where we do not know $y$ exactly, but only noisy data $y^\delta=y+e$
 	are available. In contrast to the standard setting, 
 	the  main focus of this paper concerns the case of possibly unbounded noise, i.e., 
 	  $\delta:=\|e\|$ is possibly infinite. The latter may occur, for instance, in the case where we have \textit{white noise} and $Y$ is the space of square summable sequences. It may be, however, that the noise is \textit{weakly bounded} (cf.~\cite{weaklybounded,mathegeneralnoise,egger,morozov,kokurin}), which we define as being whenever 
 	  \begin{equation}\label{eq:etadef}
 	  \eta:=\|(TT^\ast)^p(y^\delta-y)\|< \infty, \qquad \text{ for some } \quad p\in[0,\frac{1}{2}].
 	  \end{equation} 
 	  The aforementioned references, besides \cite{weaklybounded},  are restricted to the particular case in which $p=\frac{1}{2}$.
 	Since $T$ is compact and $\dim\mathcal{R}(T)=\infty$, it follows that $\mathcal{R}(T)$ is non-closed, which implies that
 	the generalised inverse (see, e.g., \cite{nashed})  $T^\dagger$ is an unbounded operator. 
 	We therefore introduce regularisation. We opt   to employ \textit{Tikhonov regularisation} (cf.~\cite{tikhon}) in which the regularised  solution is given by 
 	\begin{equation*}
 			x^\delta_\alpha:= (T^\ast T+\alpha I)^{-1}T^\ast y^\delta.
 	\end{equation*}
 	We also denote $x_\alpha$ as the regularised solution with exact data.  
 	Note that by \eqref{eq:etadef} and $p\leq \frac{1}{2}$, $x^\delta_\alpha$ is well-defined.
 	Furthermore, we shall assume henceforth that $y\in\mathcal{D}(T^\dagger)$. Then, in the case that $y$ is non-attainable, i.e. $y\notin\mathcal{R}(T)$, we may reduce to the attainable case by considering $Tx=Qy$ where $Q:Y\to\overline{\mathcal{R}(T)}$ is an orthogonal projection (cf.~\cite{engl}). 
 	 	
 	 Our central aim is to approximate the \textit{best approximate solution} $x^\dagger=T^\dagger y$, such that $x^\delta_{\alpha_\ast}$ converges to  $x^\dagger$ in the weakly bounded noise case, 
i.e.,  	
 	as $\eta\to 0$ for an appropriately selected $\alpha_\ast$. 
 	
 	 	In the current setting, (cf.~\cite{egger,weaklybounded,mathegeneralnoise}) the balancing principle or modified discrepancy rules were suggested for the parameter choice. Note that these are a-posteriori rules which require knowledge of the noise level. In practical situations, such information is not normally available and this motivates the need for so-called {\em heuristic parameter choice rules} in which the parameter is selected as the minimiser of a functional $\psi:(0,\|T\|^2)\times Y\to[0,\infty]$, i.e.,
 	\[
 	\alpha_\ast:=\argmin_{\alpha\in(0,\|T\|^2)}\psi(\alpha,y^\delta),
 	\]
 	which requires no knowledge of $\eta$. The main objective of this paper is the analysis of heuristic parameter choice rules 
 	in the weakly bounded noise (aka large noise) case.
 	
 	The functionals $\psi$ in this article may also be represented in terms of spectral theory:
 	\[
 	\psi^2(\alpha,y^\delta)=\int_0^{\|T\|^2} \pa(\lambda)\,\mathrm{d}\|F_\lambda y^\delta\|^2,
 	\]
 	where $\pa:(0,\|T\|^2)\to\mathbb{R}_+$ is a spectral filter function and $\{F_\lambda\}_\lambda$ denotes the spectral family of $TT^\ast$. For later reference we also define $\{E_\lambda\}_\lambda$ to be the spectral family of $T^\ast T$.
 	
 	 Note that in the following, $C$ will denote an arbitrary positive constant which need not be universally equal.
 
    The paper is organised as follows: in the proceeding section, we study and extend the
    classical heuristic parameter choice rules, namely, the quasi-optimality, heuristic discrepancy, and Hanke-Raus rules.  We establish convergence rates  under noise conditions 
    similar to the strongly bounded noise case \cite{kinderabstract,kinderquasi}. 
    In Section~\ref{sec:three}, we investigate  known statistical rules 
    in a deterministic framework, in particular, the generalised cross-validation rule. 
    Since this is only defined in a discrete setting, we first analyse an infinite-dimensional 
    variant, i.e., the predictive mean-square error method.

 	\section{Heuristic Parameter Choice Rules}\label{sec:two}
	The standard method of approach to prove convergence rates for heuristic parameter choice rules is to estimate the data error from above by $\psi(\alpha,y-y^\delta)$ for which we also attain an estimate from above.  One also estimates $\psi(\alpha,y)$ from above.  If $\alpha_\ast$ is the minimiser of $\psi(\alpha,y^\delta)$, then
	\begin{align*}
	\|x^\delta_{\alpha_\ast}-x^\dagger\|&\le\|x^\delta_{\alpha_\ast}-x_{\alpha_\ast}\|+\|x_{\alpha_\ast}-x^\dagger\|=\mathcal{O}(\psi(\alpha_\ast,y-y^\delta)+\|x_{\alpha}-x^\dagger\|)
	\\
	&=\mathcal{O}(\psi(\alpha,y^\delta)+\psi(\alpha_\ast,y)+\|x_{\alpha}-x^\dagger\|),
	\end{align*}
	from which the derivation of the rates is quite standard.

 	Specifically, in this paper, we consider heuristic rules based on the following $\psi$-functionals:
 	\begin{itemize}
 		\item 
 		The quasi-optimality functional (cf.~\cite{tikhon})
 		\begin{equation}\label{defQO}
 		\psi_{\text{QO}}(\alpha,y^\delta):=\alpha\left\|\frac{\mathrm{d}}{\mathrm{d}\alpha}x^\delta_\alpha\right\|,
 		\end{equation}
 		with
 		\[
 		\pp{\alpha,\text{QO}}(\lambda)=\frac{\alpha^2\lambda}{(\lambda+\alpha)^4}.
 		\]
 		\item
 		The modified heuristic discrepancy functional (cf.~\cite{HankeRaus})
 		\[
 		\psi_{\text{HD}}(\alpha,y^\delta):=\frac{1}{\alpha^{q+\frac{1}{2}}}\|(TT^\ast)^q(Tx^\delta_\alpha-y^\delta)\|, \qquad \text{where} \ q\ge p,
 		\]
 		with
 		\[
 		\pp{\alpha,\text{HD}}(\lambda)=\frac{\lambda^{2q}\alpha}{\alpha^{2q}(\lambda+\alpha)^2}.
 		\]
 		\item
 		The modified Hanke-Raus functional (cf.~\cite{HankeRaus})
 		\[
 		\psi_{\text{HR}}(\alpha,y^\delta):=\frac{1}{\alpha^{q+\frac{1}{2}}}\left\langle (TT^\ast)^{q}(Tx^\delta_{\alpha,2}-y^\delta),(TT^\ast)^{q}
 		(Tx^\delta_\alpha-y^\delta)\right\rangle^{\frac{1}{2}},
 		\text{where} \ q\ge p.
 		\]
 		where $x^\delta_{\alpha,2}:=(T^\ast T+\alpha I)^{-1}(T^\ast y^\delta+\alpha x^\delta_\alpha)$ is the second iterated Tikhonov solution, with
 		\[
 		\pp{\alpha,\text{HR}}(\lambda)=\frac{\lambda^{2q}\alpha^{2}}{\alpha^{2q}(\lambda+\alpha)^3}.
 		\]
 	\end{itemize}
Note that our definitions of the heuristic discrepancy and Hanke-Raus functionals 
are generalisations of   the usual ones. The usual functionals are obtained for 
the special case $q = 0$. The reason for this modification is that in 
the setting of weakly bounded noise, the discrepancy is possibly unbounded, and 
hence, the standard functionals need not be bounded either. 
Therefore, by introducing the operator $(TT^\ast)^q$, the functionals become finite if $q$ is chosen larger than $p$.
 This is a simple exercise to prove. Note that the quasi-optimality functional does not 
 require any modification.

The drawback of heuristic parameter choice rules comes in the form of the so-called Bakushinskii veto, which states that choosing the parameter heuristically cannot lead to a convergent regularisation method in the worst case (cf.~\cite{veto}). In spite of this, heuristic rules are still very often used with great success in practice. Motivated by this, it was shown that if one does not consider the worst case, heuristic rules may lead to convergent regularisation methods. In particular, in \cite{kinderquasi,kinderabstract}, additional noise conditions were postulated in order to estimate the data error as
 	 \begin{equation}
 	 \|x^\delta_\alpha-x_\alpha\|\le C\psi(\alpha,y-y^\delta), \label{abstractnoisecondition}
 	 \end{equation} 
 	 from which we can prove convergence of the method. As we will show 
 	 in the subsequent sections (and as was proven for the bounded noise case in \cite{kinderquasi,kinderabstract}), the estimate \eqref{abstractnoisecondition} is obtained
 	  for the mentioned rules whenever we impose a noise condition $y-y^\delta\in\mathcal{N}_\nu$, i.e.,
 	 \begin{equation}\label{defNnu}
 	 \mathcal{N}_\nu:=\left\{e\in Y:\alpha^{\nu+1}\int_{\alpha}^{\|T\|^2}\lambda^{-1}\,\mathrm{d}\|F_\lambda e\|^2\le C\int_0^\alpha\lambda^\nu\,\mathrm{d}\|F_\lambda e\|^2 \right\},
 	 \end{equation}
 	 where $\nu=1$ for $\psi=\psi_{\text{QO}}$ and $\nu=2q$ for $\psi\in\{\psi_{\text{HD}},\psi_{\text{HR}}\}$. 

Let us state some simple examples, where a noise condition \eqref{defNnu} holds, 
and, in particular, convince the reader that the assumption of weakly bounded 
noise is compatible with condition \eqref{defNnu}.  Note that in the classical 
situation of (strongly) bounded noise, it has been verified that \eqref{defNnu} is 
satisfied in typical situations \cite{kinderquasi}. Moreover, for coloured 
Gaussian noise, \eqref{defNnu} holds almost surely \cite{KiPePi18}. 

Suppose that $T T^\ast$ has eigenvalues $\{\lambda_i\}$ with polynomial decay,  and 
we assume a certain polynomial decay or growth of the noise $e = y^\delta-y$ with respect to  the eigenfunctions of $T T^\ast$, denoted by $\{u_i\}$:
\begin{equation}\label{examp} \lambda_i = \frac{1}{i^\gamma},  \quad \gamma > 0, 
\qquad \text{ and } 
\qquad |\langle y^\delta-y,u_i\rangle|^2 = \tau \frac{1}{i^\beta} \qquad \beta \in \mathbb{R}, \tau >0. \end{equation}
Then 
\[ \|y^\delta-y\|^2 = \tau \sum_{i=1}^\infty \frac{1}{i^\beta}, 
\qquad \eta^2 = \|(T T^\ast)^py^\delta-y\|^2  = 
 \tau \sum_{i=1}^\infty \frac{1}{i^{\beta+ 2 p \gamma}}. \]
 If we consider the case of unbounded, but weakly bounded noise, 
 i.e., $\|y^\delta-y\|^2 = \infty$ but $\eta < \infty$, 
 the exponents $\beta, p$ should thus satisfy 

\[  \beta \leq 1 \quad \text{ and } \quad \beta +2 p \gamma >1,
\qquad \text{ thus, } \quad  \beta \in (1-2p \gamma,1]. \] 
The inequality  in \eqref{defNnu} can then be written as 
\[ 
\tau \alpha^{\nu+1} \sum_{1\leq i \leq \alpha^{-\frac{1}{\gamma}}} i^{\gamma - \beta} = 
\alpha^{\nu+1} \sum_{\lambda_i \geq \alpha } \frac{1}{\lambda_i} 
  |\langle y^\delta-y,u_i\rangle|^2 \leq 
 C  \sum_{\lambda_i \leq \alpha } \lambda_i^\nu   |\langle y^\delta-y,u_i\rangle|^2 
 = C \tau  \sum_{i\geq \alpha^{-\frac{1}{\gamma}}} \frac{1}{i^{\gamma \nu + \beta}}. \]
Defining $N_*=\alpha^{-\frac{1}{\gamma}}$, we have
\[
\sum_{1\leq i \leq \alpha^{-\frac{1}{\gamma}}} i^{\gamma - \beta} \leq 
\int_1^{N_*} {x^{\gamma -\beta}} \, \mathrm{d}x 
\leq C  \begin{cases} N_*^{\gamma -\beta +1} &\mbox{if } \gamma -\beta >-1, \\
1 &\mbox{if } \gamma -\beta <-1, \end{cases} 
\] 
and 
\[  \sum_{i\geq \alpha^{-\frac{1}{\gamma}}} \frac{1}{i^{\gamma \nu + \beta}} 
\sim \int_{N^*}^\infty  \frac{1}{x^{\gamma \nu +\beta}} \, \mathrm{d}x \sim
 \begin{dcases} 
 \frac{C}{N_*^{\gamma \nu + \beta -1}}  & \text{ if } \gamma \nu + \beta >1,\\
\infty & \text{ if } \gamma \nu + \beta \leq 1. \end{dcases}
\] 
Since $\alpha= N_*^{-\gamma}$, we arrive at the sufficient inequality 
\[ N_*^{-\gamma(\nu+1) + 1 + \gamma -\beta } \leq C 
N^{1 -\gamma \nu -\beta}, \] 
in the case that $\gamma-\beta >-1$ and $\gamma \nu +\beta > 1$.
Since the exponents 
match, the noise condition is then satisfied. 
If
$\gamma \nu +\beta \leq 1$, then the inequality is clearly satisfied because of 
the divergent right-hand side. Thus, the noise condition holds for 
\[ \beta <\gamma+1. \] 
Roughly speaking,  this means that the noise should not be too regular 
(relative to the smoothing of the operator). In particular, the deterministic 
model of white noise, where $\beta = 0$ (no decay) satisfies a noise condition 
if  the operator is smoothing.  
Most importantly, the assumption of a noise condition 
\eqref{defNnu}  is compatible with a weakly bounded noise situation.

 	 In the latter sections, we also consider the predictive mean-square error (PMS) functional
 	 \cite{Wahba,Lukas93}
 	 \[
 	 \psi_{\text{PMS}}(\alpha,y^\delta):=\|Tx^\delta_\alpha-y\|. 
 	 \]
 	 This is not an implementable  parameter choice rule per se as it involves 
 	 the (unknown) exact data $y$. The reason for opting 
 	 to study this functional is its relation to the generalised cross-validation functional,
 	 which is one of our main aims. For ill-conditioned problems $T_n x=y_n$, where $T_n:X\to\mathbb{R}^n$, the generalised cross-validation (GCV) functional \cite{Wahba} is given by 
 	 \[
 	 \psi_{\text{GCV}}(\alpha,y_n^\delta):=\frac{1}{\rho(\alpha)}\|T_nx^\delta_\alpha-y_n^\delta\|,
 	 \]
 	 with $\rho(\alpha):=\frac{\alpha}{n}\operatorname{tr}\left\{(T_nT^\ast_n+\alpha I_n)^{-1}\right\}$. 
 	 Its relation to  $\psi_{\text{PMS}}$ is that for i.i.d.~noise, 
 	 the expected value of 	 $\psi_{\text{GCV}}^2(\alpha,y_n^\delta) - \|e\|^2$ 
 	 is an estimator for the expected value of the 
 	 predictive mean-square error functional squared, as has been shown by Wahba \cite{Wahba}. 
	For numerical treatment of the GCV method, see, e.g., \cite{Lothar}.

 	\paragraph{Convergence analysis} The convergence analysis of regularisation methods with standard (non-heuristic) parameter choice rules 
 	in the weakly bounded noise setting is well established: 
 	for instance, in the present setting, one can easily show, as in \cite{egger}, that
 	\[
 		\|x^\delta_\alpha-x^\dagger\|\to 0,
 	\]
 	if one chooses $\alpha_\ast$ such that $\alpha_\ast\to 0$ and $\eta^{2+p}/\alpha_\ast\to 0$ as $\eta\to 0$.  Therefore, even in the presence of large noise, one may obtain a convergent regularisation method.
 	
 	We are also interested in deriving rates of convergence.  To this end, we assume throughout that the best approximate solution $x^\dagger\in X$ satisfies the source condition: 
 	\begin{equation}
 	x^\dagger\in\mathcal{R}((T^\ast T)^\mu)
\Longleftrightarrow  	 	x^\dagger = (T^\ast T)^\mu \omega, \quad \|\omega\| < \infty, \qquad 
 	\qquad  0 \leq \mu \leq 1,
 	\label{sourcecond}
 	\end{equation}
 	which one can think of as a kind of \textit{smoothness} condition on the solution.

 	The following error estimates are courtesy of \cite{weaklybounded} (cf. also \cite{mathegeneralnoise,egger}):
 	\begin{proposition}
 	Let $x^\dagger$ satisfy \eqref{sourcecond}. Then
 	\begin{alignat}{2}
 	\|x^\delta_\alpha-x_\alpha\|&\le C\frac{\eta}{\alpha^{p+\frac{1}{2}}},&
 	\qquad
 	\|x_\alpha-x^\dagger\|&\le C\alpha^\mu, \quad \mu \leq 1, \label{ratex}
 	\\
 	\|T(x^\delta_\alpha-x_\alpha)\|&\le C\frac{\eta}{\alpha^p},&
 	\qquad
 	\|Tx_\alpha-y\|&\le C\alpha^{\mu+\frac{1}{2}}, \quad \mu \leq \frac{1}{2},   \label{rateT}
 	\end{alignat}
 	for all $\alpha\in(0,\|T\|^2)$.
\end{proposition}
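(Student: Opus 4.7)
The plan is to use the spectral calculus for $TT^\ast$ and $T^\ast T$ and reduce each of the four estimates to bounding a simple scalar function on $[0,\|T\|^2]$. I would split the four inequalities into two groups: the two \emph{data error} bounds (the ones involving $\eta$), and the two \emph{approximation error} bounds (the ones involving $\mu$).

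For the data error estimates, the key identity is the intertwining relation $(T^\ast T+\alpha I)^{-1}T^\ast = T^\ast(TT^\ast +\alpha I)^{-1}$, so that
\[
x^\delta_\alpha - x_\alpha = T^\ast(TT^\ast+\alpha I)^{-1}(y^\delta-y), \qquad T(x^\delta_\alpha-x_\alpha) = TT^\ast(TT^\ast+\alpha I)^{-1}(y^\delta-y).
\]
Applying the spectral theorem to $TT^\ast$, these give
\[
\|x^\delta_\alpha - x_\alpha\|^2 = \int_0^{\|T\|^2}\frac{\lambda}{(\lambda+\alpha)^2}\,\mathrm{d}\|F_\lambda(y^\delta-y)\|^2, \qquad \|T(x^\delta_\alpha-x_\alpha)\|^2 = \int_0^{\|T\|^2}\frac{\lambda^2}{(\lambda+\alpha)^2}\,\mathrm{d}\|F_\lambda(y^\delta-y)\|^2.
\]
In order to introduce the $\lambda^{2p}$ weight that appears in the definition $\eta^2 = \int \lambda^{2p}\,\mathrm{d}\|F_\lambda(y^\delta-y)\|^2$, I would factor out $\lambda^{2p}$ and estimate the remaining scalar multiplier uniformly in $\lambda$. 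The substitution $t=\lambda/\alpha$ yields
\[
\frac{\lambda^{1-2p}}{(\lambda+\alpha)^2} = \frac{1}{\alpha^{2p+1}}\cdot\frac{t^{1-2p}}{(t+1)^2}, \qquad \frac{\lambda^{2-2p}}{(\lambda+\alpha)^2}=\frac{1}{\alpha^{2p}}\cdot\frac{t^{2-2p}}{(t+1)^2},
\]
and for $p\in[0,\tfrac{1}{2}]$ both scalar functions $t^{1-2p}/(t+1)^2$ and $t^{2-2p}/(t+1)^2$ are uniformly bounded on $[0,\infty)$. This delivers the two noisy bounds with the claimed powers $\alpha^{-(p+1/2)}$ and $\alpha^{-p}$.

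For the approximation errors, I would insert the source condition $x^\dagger=(T^\ast T)^\mu\omega$. Then
\[
x_\alpha - x^\dagger = -\alpha(T^\ast T+\alpha I)^{-1}(T^\ast T)^\mu\omega,
\]
and using $y=Tx^\dagger$ and the intertwining $T(T^\ast T)^\mu = (TT^\ast)^\mu T$,
\[
Tx_\alpha - y = -\alpha(TT^\ast+\alpha I)^{-1}(TT^\ast)^\mu T\omega.
\]
Applying spectral calculus to $T^\ast T$ and $TT^\ast$ respectively, everything boils down to the classical moment inequalities
\[
\sup_{\lambda\in(0,\|T\|^2]}\frac{\alpha\,\lambda^{\mu}}{\lambda+\alpha} \le C\alpha^{\mu}\ (\mu\le 1), \qquad \sup_{\lambda\in(0,\|T\|^2]}\frac{\alpha\,\lambda^{\mu+1/2}}{\lambda+\alpha}\le C\alpha^{\mu+1/2}\ (\mu\le 1/2),
\]
which are verified via the substitution $t=\lambda/\alpha$ and standard calculus.

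The only real subtlety, and the one point to check carefully, is the compatibility between the $(TT^\ast)^p$-weighted noise norm and the spectral integrals in the data error terms: we need $p\le\tfrac{1}{2}$ precisely so that the residual powers of $\lambda$ in $t^{1-2p}/(t+1)^2$ remain non-negative and the sup is finite, which is why the hypothesis $p\in[0,\tfrac{1}{2}]$ from \eqref{eq:etadef} enters decisively. Everything else is routine spectral calculus, and the qualifications $\mu\le 1$ (for $x_\alpha-x^\dagger$) and $\mu\le 1/2$ (for $Tx_\alpha-y$) are exactly the classical saturation bounds for Tikhonov regularisation.
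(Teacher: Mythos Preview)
Your argument is correct and complete: the spectral representations, the intertwining identity, the factorisation $\lambda^{2p}\cdot(\text{bounded remainder})$ for the data error terms, and the moment inequality $\frac{\alpha\lambda^s}{\lambda+\alpha}\le C\alpha^s$ for $0\le s\le 1$ for the approximation error terms are exactly the right ingredients, and the restrictions $p\in[0,\tfrac12]$, $\mu\le 1$, $\mu\le\tfrac12$ enter precisely where you indicate.

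As for the comparison: the paper does not actually supply a proof of this proposition. It simply attributes the estimates to \cite{weaklybounded} (with further references to \cite{mathegeneralnoise,egger}) and moves on. Your spectral-calculus derivation is the standard one and is essentially what one finds in those references, and it is also consistent with the general filter-function inequality \eqref{genin} that the paper records immediately after the proposition and uses repeatedly later. So there is no discrepancy in approach to discuss; you have filled in the details the paper omits by citation.
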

This proposition also illustrates the fact that convergence rates for Tikhonov regularisation
do not improve for $\mu \geq 1$, which is the well-known saturation effect (cf.~\cite{engl}). 
This is also the reason why we do not assume a source condition in \eqref{sourcecond} with 
$\mu >1$.

	We now consider an a-priori parameter choice  yielding a so-called optimal (order) rate. Thereafter, we will utilise this a-priori parameter choice strategy to deduce convergence rates with respect to the heuristic parameter choice rules. In particular, if $x^\dagger$ satisfies the source condition \eqref{sourcecond}, then using the estimates of the previous proposition, one can estimate the total error as
 		\begin{equation}
 			\|x^\delta_\alpha-x^\dagger\|\le C\alpha^\mu+C\frac{\eta}{\alpha^{p+\frac{1}{2}}}=\mathcal{O}(\eta^{\frac{2\mu}{2\mu+2p+1}}), 
 			\label{totaltriangleerror}
 		\end{equation}
 		 which follows by taking the infimum over all $\alpha$. In particular, one obtains that
 		 \[
 		 \alpha_{\text{opt}}\sim\eta^{\frac{2}{2\mu+2p+1}},
 		 \]
 		 is the so-called optimal (order) parameter choice.

For the following analysis, we state a standard estimate for spectral filter functions: for $ t \geq 0$, there is a constant $C$ such that for all nonegative $\alpha,\lambda$
\begin{equation}\label{genin}
\frac{\lambda^t}{(\alpha +\lambda)} \leq C
\begin{dcases} \frac{1}{\alpha^{1-t}}   & 0\leq t \leq 1 \\ 
     1& t \geq 1 \end{dcases} = \frac{C}{\alpha^{\max\{1-t,0\}}}.    
\end{equation}

\subsection{The Quasi-Optimality Rule}
Following \cite{kinderquasi}, we show some upper and lower bounds for the 
quasi-optimality functional, which subsequently allow us to derive convergence rates. 
  	\begin{proposition}\label{PropQOUP}
 		For all $\alpha\in(0,\|T\|^2)$, one has
 		\begin{align*}
 			\psi_{\text{QO}}(\alpha,y-y^\delta)&\le\|x^\delta_\alpha-x_\alpha\|, \\
 			  	\psi_{\text{QO}}(\alpha,y)&\le\|x_\alpha-x^\dagger\|.
 		\end{align*}
 	\end{proposition}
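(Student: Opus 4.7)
\noindent My approach is to realise both sides of each inequality via the spectral calculus and then reduce to elementary pointwise comparisons of scalar filter functions. The key algebraic identities I would invoke are $(T^\ast T + \alpha I)^{-1} T^\ast = T^\ast (TT^\ast + \alpha I)^{-1}$ together with the intertwining relation $f(TT^\ast) T = T f(T^\ast T)$ for continuous $f$, which implies
\[
\int_0^{\|T\|^2} f(\lambda)\, \mathrm{d}\|F_\lambda Tx\|^2 = \int_0^{\|T\|^2} \lambda f(\lambda)\, \mathrm{d}\|E_\lambda x\|^2
\]
for every $x\in X$.

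For the first inequality, differentiating the identity $x^\delta_\alpha - x_\alpha = T^\ast(TT^\ast + \alpha I)^{-1}(y^\delta - y)$ with respect to $\alpha$ and applying the spectral calculus of $TT^\ast$ yields
\[
\psi_{\text{QO}}^2(\alpha, y - y^\delta) = \int_0^{\|T\|^2} \frac{\alpha^2 \lambda}{(\lambda+\alpha)^4} \, \mathrm{d}\|F_\lambda(y-y^\delta)\|^2,
\]
while the same representation gives $\|x^\delta_\alpha - x_\alpha\|^2 = \int_0^{\|T\|^2} \frac{\lambda}{(\lambda+\alpha)^2}\, \mathrm{d}\|F_\lambda(y-y^\delta)\|^2$. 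The trivial pointwise bound $\alpha^2 \le (\lambda+\alpha)^2$ for $\lambda,\alpha \ge 0$ then produces the first claim after integration against the common spectral measure.

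For the second inequality I would use that $y \in \mathcal{D}(T^\dagger)$ gives $T^\ast y = T^\ast T x^\dagger$, hence $x_\alpha - x^\dagger = -\alpha (T^\ast T + \alpha I)^{-1} x^\dagger$ and, by the spectral calculus of $T^\ast T$,
\[
\|x_\alpha - x^\dagger\|^2 = \int_0^{\|T\|^2} \frac{\alpha^2}{(\lambda+\alpha)^2}\, \mathrm{d}\|E_\lambda x^\dagger\|^2.
\]
To bring $\psi_{\text{QO}}(\alpha,y)$ onto the same footing, observe that $\Psi_{\alpha,\text{QO}}$ vanishes at $\lambda = 0$, so the component of $y$ in $\mathcal{N}(T^\ast)$ makes no contribution to the spectral integral and one may replace $y$ by $Qy = Tx^\dagger$. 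Applying the intertwining identity above with $f = \Psi_{\alpha,\text{QO}}$ then delivers
\[
\psi_{\text{QO}}^2(\alpha, y) = \int_0^{\|T\|^2} \frac{\alpha^2 \lambda^2}{(\lambda+\alpha)^4}\, \mathrm{d}\|E_\lambda x^\dagger\|^2,
\]
and the elementary comparison $\lambda^2 \le (\lambda+\alpha)^2$ concludes the argument.

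The main delicacy is bookkeeping rather than a genuine obstacle: one must handle the orthogonal decomposition $y = Qy + (I-Q)y$ correctly (legitimate precisely because the filter vanishes at $\lambda = 0$) and pass cleanly between the spectral families $\{F_\lambda\}$ of $TT^\ast$ and $\{E_\lambda\}$ of $T^\ast T$ through the intertwining relation. Once these integral representations are set up, both inequalities collapse to trivial pointwise bounds on rational functions of $\lambda$ and $\alpha$.
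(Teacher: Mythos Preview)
Your argument is correct and is precisely the standard spectral-calculus proof: both inequalities reduce to the pointwise bounds $\alpha^2/(\lambda+\alpha)^2\le 1$ and $\lambda^2/(\lambda+\alpha)^2\le 1$ on the filter functions, and your handling of the $(I-Q)y$ component and the passage from $\{F_\lambda\}$ to $\{E_\lambda\}$ is clean. Note that the paper itself omits the proof entirely and simply refers the reader to \cite{kinderquasi}; your approach is exactly the one used there, so there is nothing to compare.
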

 	We omit the proof, but one may find it in \cite{kinderquasi}.
 
 Next, by assuming a noise condition, we verify the essential lower bound 
for the data-error part of the quasi-optimality functional: 
 \begin{lemma}\label{lemQOlow}
 		Let  $y-y^\delta\in\mathcal{N}_{1}$. Then there exists a positive constant $C$ such that 
 		\begin{equation*}
 			\psi_{\text{QO}}(\alpha,y-y^\delta)\ge C \|x^\delta_\alpha-x_\alpha\|,
 		\end{equation*}
 		for all $\alpha\in(0,\|T\|^2)$.
 	\end{lemma}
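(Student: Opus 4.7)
The plan is to work spectrally with respect to the spectral family $\{F_\lambda\}$ of $TT^\ast$. Writing $e := y-y^\delta$, the key representations are
\begin{equation*}
\|x^\delta_\alpha - x_\alpha\|^2 = \int_0^{\|T\|^2} \frac{\lambda}{(\lambda+\alpha)^2}\, \mathrm{d}\|F_\lambda e\|^2, \qquad \psi_{\text{QO}}^2(\alpha, y-y^\delta) = \int_0^{\|T\|^2} \frac{\alpha^2 \lambda}{(\lambda+\alpha)^4}\, \mathrm{d}\|F_\lambda e\|^2.
\end{equation*}
I will split both integrals at $\lambda=\alpha$, observe that the two integrands are pointwise comparable on $[0,\alpha]$, and invoke the noise condition $\mathcal{N}_1$ to dominate the contribution from $[\alpha,\|T\|^2]$ by that on $[0,\alpha]$.

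Concretely, for $\lambda \in (0,\alpha]$ the elementary bounds $\alpha \le \lambda+\alpha \le 2\alpha$ give $\tfrac{\alpha^2\lambda}{(\lambda+\alpha)^4} \ge \tfrac{\lambda}{16\alpha^2}$, so restricting the quasi-optimality integral to $[0,\alpha]$ yields
\begin{equation*}
\psi_{\text{QO}}^2(\alpha, y-y^\delta) \ge \frac{1}{16\alpha^2}\int_0^\alpha \lambda\, \mathrm{d}\|F_\lambda e\|^2.
\end{equation*}
For the data error, split the integral at $\alpha$ and use $(\lambda+\alpha)^2 \ge \alpha^2$ on $[0,\alpha]$ together with $(\lambda+\alpha)^2 \ge \lambda^2$ on $[\alpha,\|T\|^2]$ to obtain
\begin{equation*}
\|x^\delta_\alpha - x_\alpha\|^2 \le \frac{1}{\alpha^2}\int_0^\alpha \lambda\, \mathrm{d}\|F_\lambda e\|^2 + \int_\alpha^{\|T\|^2} \frac{1}{\lambda}\, \mathrm{d}\|F_\lambda e\|^2.
\end{equation*}

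Finally, the noise condition \eqref{defNnu} with $\nu=1$ — which is precisely the exponent built into the quasi-optimality filter — bounds the second integral by $\tfrac{C}{\alpha^2}\int_0^\alpha \lambda\, \mathrm{d}\|F_\lambda e\|^2$. Combining the three displays produces $\|x^\delta_\alpha - x_\alpha\|^2 \le C' \psi_{\text{QO}}^2(\alpha, y-y^\delta)$, and taking square roots completes the proof. The only real subtlety is the mismatch in decay on $[\alpha,\|T\|^2]$: the data-error filter $\lambda/(\lambda+\alpha)^2$ behaves like $1/\lambda$ there, whereas the quasi-optimality filter behaves like $\alpha^2/\lambda^3$, so the two are not pointwise comparable at the high end of the spectrum. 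The assumption $y-y^\delta\in\mathcal{N}_1$ is tailored precisely to absorb that discrepancy, and without it the inequality would fail for noise concentrated in the high-frequency part of the spectrum.
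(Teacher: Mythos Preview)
Your proof is correct and follows essentially the same route as the paper: spectral representations, a split at $\lambda=\alpha$, pointwise comparability of the two filter functions on $[0,\alpha]$, and the noise condition $\mathcal{N}_1$ to absorb the high-frequency contribution $\int_\alpha^{\|T\|^2}\lambda^{-1}\,\mathrm{d}\|F_\lambda e\|^2$. The only difference is organisational---you bound $\|x^\delta_\alpha-x_\alpha\|^2$ from above first and then compare to the lower bound for $\psi_{\text{QO}}^2$, whereas the paper bounds $\psi_{\text{QO}}^2$ from below and then dominates each piece of the data error---but the mathematical content is identical.
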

 	
 	\begin{proof}
 		For all $\alpha\in(0,\|T\|^2)$, we can estimate
 			\begin{align*}
 			&\psi_{\text{QO}}^2(\alpha,y-y^\delta)=\int^{\|T\|^2}_0 \frac{\lambda\alpha^2}{(\lambda+\alpha)^4} \,\mathrm{d}\|F_\lambda (y-y^\delta)\|^2 				
 				\ge C\int_0^\alpha\frac{\lambda}{(\lambda+\alpha)^2}\,\mathrm{d}\|F_\lambda (y-y^\delta)\|^2 \\				
 				&\ge C\int_0^\alpha\frac{\lambda}{\alpha^2}\,\mathrm{d}\|F_\lambda (y-y^\delta)\|^2 \ge \frac{1}{\alpha^{2}}C\int_0^\alpha\lambda\,\mathrm{d}\|F_\lambda (y-y^\delta)\|^2
 				\ge \alpha^{2}\frac{1}{\alpha^{2}}\int_\alpha^{\|T\|^2}\lambda^{-1}\,\mathrm{d}\|F_\lambda (y-y^\delta)\|^2,
 			\end{align*}
 		where we introduced the noise condition \eqref{defNnu} for $\nu = 1$ in the penultimate inequality.
From this, 
\[ \frac{\lambda}{(\lambda+\alpha)^2} \leq C \begin{dcases}   \frac{1}{\lambda} &\mbox{if } \lambda \geq \alpha, \\ 
  \frac{\alpha^2\lambda}{(\lambda+\alpha)^4} &\mbox{if } \lambda \leq \alpha, \end{dcases} \] 
and 
\begin{align*}  	\|x^\delta_\alpha-x_\alpha\|^2 =  
 \int_0^{\alpha} \frac{\lambda}{(\lambda+\alpha)^2}\,\mathrm{d}\|F_\lambda (y-y^\delta)\|^2  	+
 				\int_{\alpha}^{\|T\|^2} \frac{\lambda}{(\lambda+\alpha)^2}\,\mathrm{d}\|F_\lambda (y-y^\delta)\|^2,  			
 				\end{align*} 
the result follows.  				
 	\end{proof}
Thus, with the above estimates, we may now state the convergence rate of the total error 
with respect to the regularised solution and the parameter chosen according to the quasi-optimality rule:
 	\begin{theorem}\label{thQO}
 		Let $y-y^\delta\in \mathcal{N}_{1}$, $T^\ast y\ne 0$,
 		$x^\dagger$ satisfy \eqref{sourcecond},  and let $\alpha_\ast$ be the 
 		parameter selected according to  the quasi-optimality  rule. 
 		Then we obtain
 		\begin{equation*}
 			\|x^\delta_{\alpha_\ast}-x^\dagger\|=\mathcal{O}(\eta^{\frac{2\mu}{2\mu+2p+1}\mu}),
 		\end{equation*}
for $\eta$ sufficiently small. 		
 	\end{theorem}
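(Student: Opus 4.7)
The plan is to follow the template sketched at the beginning of Section~\ref{sec:two}: split the total error by the triangle inequality
\[ \|x^\delta_{\alpha_\ast}-x^\dagger\| \le \|x^\delta_{\alpha_\ast}-x_{\alpha_\ast}\| + \|x_{\alpha_\ast}-x^\dagger\|, \]
dominate the data-error term by the quasi-optimality functional via Lemma~\ref{lemQOlow} (this is exactly where $y-y^\delta\in\mathcal{N}_1$ is consumed), and then exploit the minimising property of $\alpha_\ast$ by comparing to the a-priori optimal choice $\alpha_{\text{opt}}\sim\eta^{2/(2\mu+2p+1)}$ identified in \eqref{totaltriangleerror}.

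Concretely, after Lemma~\ref{lemQOlow} gives $\|x^\delta_{\alpha_\ast}-x_{\alpha_\ast}\|\le C\psi_{\text{QO}}(\alpha_\ast,y-y^\delta)$, I would use the fact that $\psi_{\text{QO}}^2$ is an $L^2$-type spectral integral, hence satisfies Minkowski's inequality, to write
\[ \psi_{\text{QO}}(\alpha_\ast,y-y^\delta)\le \psi_{\text{QO}}(\alpha_\ast,y^\delta)+\psi_{\text{QO}}(\alpha_\ast,y) \le \psi_{\text{QO}}(\alpha_{\text{opt}},y^\delta)+\psi_{\text{QO}}(\alpha_\ast,y), \]
the last step being the definition of $\alpha_\ast$. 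Applying once more the triangle inequality to $\psi_{\text{QO}}(\alpha_{\text{opt}},y^\delta)$, together with the upper bounds of Proposition~\ref{PropQOUP} and the rate estimates in \eqref{ratex} evaluated at $\alpha_{\text{opt}}$, gives $\psi_{\text{QO}}(\alpha_{\text{opt}},y^\delta)=\mathcal{O}(\eta^{2\mu/(2\mu+2p+1)})$. Combining and using Proposition~\ref{PropQOUP} on the remaining $\psi_{\text{QO}}(\alpha_\ast,y)$ term yields
\[ \|x^\delta_{\alpha_\ast}-x^\dagger\| \le C\eta^{\frac{2\mu}{2\mu+2p+1}}+C\|x_{\alpha_\ast}-x^\dagger\|. \]

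The main obstacle is the residual term $\|x_{\alpha_\ast}-x^\dagger\|\le C\alpha_\ast^\mu$: to close the estimate I must show $\alpha_\ast^\mu=\mathcal{O}(\eta^{2\mu/(2\mu+2p+1)})$, which requires a non-trivial a-posteriori control on $\alpha_\ast$. This is precisely where the hypothesis $T^\ast y\neq 0$ (equivalently, $x^\dagger\neq 0$) enters. Following the template of \cite{kinderquasi}, I would establish a matching \emph{lower} bound of the form $\psi_{\text{QO}}(\alpha,y)\ge c\,\varphi(\alpha)$ by direct spectral computation using the source condition \eqref{sourcecond}; the non-triviality of $T^\ast y$ ensures that this bound does not collapse. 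Inserting this lower bound into $\psi_{\text{QO}}(\alpha_\ast,y)\le \psi_{\text{QO}}(\alpha_{\text{opt}},y^\delta)+\psi_{\text{QO}}(\alpha_\ast,y-y^\delta)$ and applying the Lemma, the self-referential contribution of $\psi_{\text{QO}}(\alpha_\ast,y-y^\delta)$ can be absorbed provided $\eta$ is sufficiently small, yielding the needed bound on $\alpha_\ast$ and therefore on $\|x_{\alpha_\ast}-x^\dagger\|$. Substituting this bound back into the decomposition above produces the claimed rate; the "$\eta$ sufficiently small" clause in the theorem statement reflects precisely the absorption step in this last part of the argument.
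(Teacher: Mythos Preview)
Your scheme coincides with the paper's up through the estimate $\|x^\delta_{\alpha_\ast}-x^\dagger\| \le C\eta^{\frac{2\mu}{2\mu+2p+1}}+C\alpha_\ast^\mu$. The divergence---and the gap---lies in the control of $\alpha_\ast$. You plan to lower-bound $\psi_{\text{QO}}(\alpha,y)$ (exact data) by some $c\,\varphi(\alpha)$ and then ``absorb'' $\psi_{\text{QO}}(\alpha_\ast,y-y^\delta)$ for $\eta$ small. Under the stated hypotheses (no regularity condition \eqref{sourcecondition2}), the only lower bound obtainable from $T^\ast y\neq 0$ is $\varphi(\alpha)=\alpha$; the source condition alone does not yield $\varphi(\alpha)=\alpha^\mu$. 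The term you wish to absorb is, via Proposition~\ref{PropQOUP} and \eqref{ratex}, at best of order $\eta\,\alpha_\ast^{-p-\frac{1}{2}}$, which blows up as $\alpha_\ast\to 0$; smallness of $\eta$ alone therefore cannot force it below $c\,\alpha_\ast$, and the absorption as you describe it does not close. (One can rescue the route by treating $c\,\alpha_\ast \le C\eta^{\frac{2\mu}{2\mu+2p+1}} + C\eta\,\alpha_\ast^{-p-\frac{1}{2}}$ as a nonlinear inequality in $\alpha_\ast$ and splitting into cases, but that is a different argument from the one you sketch.)

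The paper avoids this detour by lower-bounding the \emph{noisy-data} functional directly: since $\|T^\ast y^\delta\|\ge\|T^\ast y\|-\|TT^\ast\|^{\frac{1}{2}-p}\eta>0$ for $\eta$ sufficiently small, a crude spectral estimate gives $\psi_{\text{QO}}(\alpha,y^\delta)\ge C\alpha$ for all $\alpha$, and then immediately
\[
\alpha_\ast\le C\,\psi_{\text{QO}}(\alpha_\ast,y^\delta)\le C\,\psi_{\text{QO}}(\alpha_{\text{opt}},y^\delta)=\mathcal{O}\bigl(\eta^{\frac{2\mu}{2\mu+2p+1}}\bigr),
\]
with no residual term to absorb. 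The ``$\eta$ sufficiently small'' clause is consumed solely to keep $\|T^\ast y^\delta\|$ bounded away from zero, not in any self-referential estimate.
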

 	\begin{proof}
From  Proposition~\ref{PropQOUP}, the definition of $\alpha_\ast$ and the triangle inequality, it   follows, with 	$\alpha=\eta^{\frac{2}{2\mu+2p+1}}$, that 
	\begin{align*}
 			&\psi^2_{\text{QO}}(\alpha_\ast,y^\delta)\le	\psi^2_{\text{QO}}(\alpha,y^\delta) \le 
\left(\psi_{\text{QO}}(\alpha,y^\delta-y) + \psi_{\text{QO}}(\alpha,y)\right)^2   		\\	
&\leq 
2 \|x_\alpha-x^\dagger\|^2+2 \|x^\delta_\alpha-x_\alpha\|^2 \leq C\alpha^{2\mu}+C\frac{\eta^2}{\alpha^{2p+1}}
= \mathcal{O}\left(\left[\eta^\frac{2\mu}{2\mu+2p+1}\right]^2\right).
 				\end{align*} 	 	
By the triangle inequality and Lemma~\ref{lemQOlow}, 		
 				\begin{align*}
 				\|x^\delta_{\alpha_\ast}-x^\dagger\|&\le\|x_{\alpha_\ast}-x^\dagger\|+\|x_{\alpha_\ast}-x^\delta_{\alpha_\ast}\|
 		=\mathcal{O}\left(\|x_{\alpha_\ast}-x^\dagger\|+\psi_{\text{QO}}(\alpha_\ast,y-y^\delta)\right) \\ 		
 		& \leq 
\mathcal{O}\left( \|x_{\alpha_\ast}-x^\dagger\|+\psi_{\text{QO}}(\alpha_\ast,y^\delta ) + \psi_{\text{QO}}(\alpha_\ast,y) \right)
				=\mathcal{O}\left( \alpha^\mu_\ast+\left[\eta^\frac{2\mu}{2\mu+2p+1}\right]^2 \right).
 		\end{align*}
	Note that 
 		\begin{equation}\label{qolowlow}
 		\begin{split}
 			&\psi_{\text{QO}}^2(\alpha,y^\delta)\ge\alpha^2\int_0^{\|T\|^2} \frac{\lambda}{(\lambda+\|T\|^2)^4}\,\mathrm{d}\|F_\lambda y^\delta\|^2 \geq 
 			\frac{1}{(2\|T\|^2)^4} \int_0^{\|T\|^2}\lambda\, \mathrm{d}\|F_\lambda y^\delta\|^2\\
 			&\geq 
 				\alpha^2 \frac{1}{(2\|T\|^2)^4} \left( \|T^* y\| - \|T T^\ast\|^{\frac{1}{2}-p} \eta\right)^2 \geq 
 				C	\alpha^2,
 			\end{split}	
 		\end{equation}
 		for all $\alpha\in(0,\|T\|^2)$	and $\eta$ sufficiently small. 
  		Hence for $\alpha=\alpha_\ast$,  it follows that $\alpha_\ast \leq  C \eta^\frac{2\mu}{2\mu+2p+1}$. 		
 		Therefore, we may deduce that
 		\begin{equation*}
 			\|x^\delta_{\alpha_\ast}-x^\dagger\|=\mathcal{O}(\eta^{\frac{2\mu}{2\mu+2p+1}\mu}),
 		\end{equation*}
 		for $\eta$ sufficiently small, which is what we wanted to show.
 	\end{proof}
	One may notice that the above convergence rates are  optimal for the 
	saturation case $\mu = 1$, but they are only suboptimal for $\mu <1$ (similarly as in \cite{kinderabstract}).
		 We may, however, impose an additional condition in order to achieve an optimal
	convergence rate. More specifically, we impose the following \textit{regularity condition} on the best approximate solution, $x^\dagger\in X$:
 	\begin{equation}
 		\alpha^2\int_\alpha^\infty \lambda^{-2}\,\mathrm{d}\|E_\lambda x^\dagger\|^2\ge C\int_0^\alpha\mathrm{d}\|E_\lambda x^\dagger\|^2. \label{sourcecondition2}
 	\end{equation}
This condition was also used in \cite{kinderquasi,kinderabstract} where it was 
shown that it is often satisfied.  
 	\begin{theorem}
 	 In addition to the assumptions of Theorem~\ref{thQO}, let the regularity condition 
 	 \eqref{sourcecondition2} hold. 
 Then there exists a constant $C>0$ such that
 		\begin{equation*}
 			\|x^\delta_{\alpha_\ast}-x^\dagger\|\le C\inf_{\alpha\in(0,\|T\|^2)}\left(\|x_\alpha-x^\dagger\|+\|x_\alpha-x^\delta_\alpha\|\right),
 		\end{equation*}
 		which yields the optimal convergence rate.
 	\end{theorem}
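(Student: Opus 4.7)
The plan is to establish an oracle-type inequality by using the regularity condition \eqref{sourcecondition2} to close the two-sided equivalence between the quasi-optimality functional and the total reconstruction error.

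First I would prove the approximation-side analog of Lemma~\ref{lemQOlow}: under \eqref{sourcecondition2},
\[
\psi_{\text{QO}}(\alpha,y) \ge C\|x_\alpha - x^\dagger\|, \qquad \alpha \in (0,\|T\|^2).
\]
Since $y \in \overline{\mathcal{R}(T)}$, the intertwining identity $\mathrm{d}\|F_\lambda y\|^2 = \lambda\,\mathrm{d}\|E_\lambda x^\dagger\|^2$ holds on $(0,\|T\|^2]$, turning the desired estimate into
\[
\alpha^2\int \tfrac{\lambda^2}{(\lambda+\alpha)^4}\,\mathrm{d}\|E_\lambda x^\dagger\|^2 \ge C\,\alpha^2 \int \tfrac{1}{(\lambda+\alpha)^2}\,\mathrm{d}\|E_\lambda x^\dagger\|^2.
\]
Bounding $\frac{\lambda^2}{(\lambda+\alpha)^4} \ge \frac{1}{16\lambda^2}$ on $\lambda \ge \alpha$, splitting the right-hand integral at $\alpha$, and applying \eqref{sourcecondition2} to dominate the $[0,\alpha]$ contribution by the tail $\int_\alpha^\infty \lambda^{-2}\,\mathrm{d}\|E_\lambda x^\dagger\|^2$ gives the claim exactly as in Lemma~\ref{lemQOlow}. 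Together with Lemma~\ref{lemQOlow} and Proposition~\ref{PropQOUP}, this yields uniform equivalences $\psi_{\text{QO}}(\alpha,y) \sim \|x_\alpha - x^\dagger\|$ and $\psi_{\text{QO}}(\alpha,y-y^\delta) \sim \|x_\alpha - x^\delta_\alpha\|$.

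Using the minimizing property of $\alpha_\ast$, the triangle inequality for the spectral semi-norm $\psi_{\text{QO}}(\alpha,\cdot)$, and Proposition~\ref{PropQOUP}, I obtain for every $\alpha$
\[
\psi_{\text{QO}}(\alpha_\ast,y^\delta) \le \psi_{\text{QO}}(\alpha,y^\delta) \le \psi_{\text{QO}}(\alpha,y) + \psi_{\text{QO}}(\alpha,y-y^\delta) \le \|x_\alpha - x^\dagger\| + \|x_\alpha - x^\delta_\alpha\|,
\]
so $\psi_{\text{QO}}(\alpha_\ast,y^\delta) \le \inf_\alpha(\|x_\alpha - x^\dagger\| + \|x_\alpha - x^\delta_\alpha\|)$. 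Conversely, by the triangle inequality for the error and the lower bounds from the first step,
\[
\|x^\delta_{\alpha_\ast} - x^\dagger\| \le \|x_{\alpha_\ast}-x^\dagger\| + \|x^\delta_{\alpha_\ast}-x_{\alpha_\ast}\| \le C\bigl(\psi_{\text{QO}}(\alpha_\ast,y) + \psi_{\text{QO}}(\alpha_\ast,y-y^\delta)\bigr),
\]
so the entire argument reduces to controlling this sum by a constant multiple of $\psi_{\text{QO}}(\alpha_\ast,y^\delta)$.

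This last step is the main obstacle, since the expansion
\[
\psi_{\text{QO}}^2(\alpha_\ast,y^\delta) = \psi_{\text{QO}}^2(\alpha_\ast,y) + 2\,\mathrm{Re}\int \pp{\alpha_\ast,\text{QO}}\,\mathrm{d}\langle F_\lambda y, F_\lambda(y^\delta - y)\rangle + \psi_{\text{QO}}^2(\alpha_\ast,y-y^\delta)
\]
contains a spectral cross term that may be negative. I would resolve this by a case analysis at $\alpha_\ast$: if $\psi_{\text{QO}}(\alpha_\ast,y)$ and $\psi_{\text{QO}}(\alpha_\ast,y-y^\delta)$ differ by at least a factor of two, the reverse triangle inequality $\psi_{\text{QO}}(\alpha_\ast,y^\delta) \ge |\psi_{\text{QO}}(\alpha_\ast,y) - \psi_{\text{QO}}(\alpha_\ast,y-y^\delta)|$ bounds the larger one, and hence the sum, by a constant multiple of $\psi_{\text{QO}}(\alpha_\ast,y^\delta)$; if the two are comparable, the equivalences of the first step force $\|x_{\alpha_\ast}-x^\dagger\| \sim \|x_{\alpha_\ast}-x^\delta_{\alpha_\ast}\|$, meaning that $\alpha_\ast$ already lies at the balance point of the two error components. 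Exploiting the opposite monotonicity in $\alpha$ of $\|x_\alpha - x^\dagger\|$ and $\|x_\alpha - x^\delta_\alpha\|$, one then shows that for the minimizer $\alpha^*$ of the right-hand side, whichever side of $\alpha^*$ the parameter $\alpha_\ast$ falls on, one component is immediately bounded by $\inf_\alpha(\|x_\alpha - x^\dagger\| + \|x_\alpha - x^\delta_\alpha\|)$ and the other by the comparability. Chaining these estimates yields the oracle inequality, and the optimal rate $\mathcal{O}(\eta^{2\mu/(2\mu+2p+1)})$ then follows by evaluating the infimum at $\alpha_{\text{opt}} \sim \eta^{2/(2\mu+2p+1)}$ and invoking \eqref{totaltriangleerror}.
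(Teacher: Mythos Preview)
Your proposal is correct and follows essentially the same strategy the paper invokes: establish the two-sided equivalences $\psi_{\text{QO}}(\alpha,y)\sim\|x_\alpha-x^\dagger\|$ (via the regularity condition \eqref{sourcecondition2}) and $\psi_{\text{QO}}(\alpha,y-y^\delta)\sim\|x_\alpha-x^\delta_\alpha\|$ (via the noise condition), then combine them with the minimising property of $\alpha_\ast$ to obtain the oracle inequality, exactly as in \cite{kinderabstract}. The paper's own proof here is just a one-line reference to that scheme; you have supplied the details, including a clean treatment of the cross-term obstacle by the case split plus the monotonicity argument for the balanced case, which is precisely the kind of argument used in the cited reference.
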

 	\begin{proof}
 		Recall that $C\|x^\delta_\alpha-x_\alpha\|\le\psi_{\text{QO}}(\alpha,y-y^\delta)\le \|x^\delta_\alpha-x_\alpha\| $ and 
 		the regularity condition \eqref{sourcecondition2} imply cf.~\cite{kinderabstract} that 
 		 $C\|x_\alpha-x^\dagger\| \leq 		\psi_{\text{QO}}(\alpha,y)\le C \|x_\alpha-x^\dagger\|$, 
 		 from which the theorem follows similar to \cite{kinderabstract}. 	
 	\end{proof} 	
In essence, the stated theorems are completely analogous to the bounded noise case \cite{kinderabstract}.  	
Thus, all the known results for the quasi-optimality principle extend to the 
weakly-bounded noise case. For advanced numerical implementations of this method, 
see \cite{RausHam}.

	\subsection{The Modified Heuristic Discrepancy Rule} 	
Now we prove convergence rates for the modified heuristic discrepancy rule 
in a similar way by proving estimates for the functional acting on the noise and exact data.
Note that this method
is sometimes confusingly  also referred to as the Hanke-Raus rule (as both rules 
agree for Landweber iteration). For clarity,  it is preferable to name this method as the heuristic analogue of the classical discrepancy rule.

The upper bounds for the functional are straightforward to derive:
 	\begin{proposition}
 		For  $p\leq q \leq 1 + p$, we have  
 		\begin{equation}
 			\psi_{\text{HD}}(\alpha,y^\delta-y)\le C \frac{\eta}{\alpha^{\frac{1}{2}+p,}} \label{hdupperboundnoise}
 		\end{equation}
 		for all $\alpha\in(0,\|T\|^2)$.
 		Let $x^\dagger$ satisfy \eqref{sourcecond} and suppose $q  \le\frac{1}{2}-\mu$. Then there exists a positive constant $C$ such that
 		\begin{equation}
 			\psi_{\text{HD}}(\alpha,y)\le C\alpha^\mu. \label{hdexactupperbound}
 		\end{equation}
 	\end{proposition}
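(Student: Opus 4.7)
The plan is to establish both estimates by pointwise bounds on the integrand in the spectral representation $\psi_{\text{HD}}^2(\alpha,f) = \int_0^{\|T\|^2} \frac{\lambda^{2q}\alpha}{\alpha^{2q}(\lambda+\alpha)^2}\,\mathrm{d}\|F_\lambda f\|^2$ and then apply them against the appropriate spectral measure.

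For the noise bound \eqref{hdupperboundnoise}, I would factor out $\lambda^{2p}$ from the integrand so that the remaining mass $\int \lambda^{2p}\,\mathrm{d}\|F_\lambda(y^\delta-y)\|^2 = \eta^2$ can be used directly. Concretely, it suffices to show
\[
\frac{\lambda^{2(q-p)}}{(\lambda+\alpha)^2} \le \frac{C}{\alpha^{2-2(q-p)}},
\]
which, after multiplication by $\alpha/\alpha^{2q}$, is precisely $\frac{\lambda^{2q}\alpha}{\alpha^{2q}(\lambda+\alpha)^2} \le \lambda^{2p}\frac{C}{\alpha^{1+2p}}$. The displayed pointwise estimate is an immediate consequence of \eqref{genin} applied with $t = q-p$, squared, provided $0 \le q-p \le 1$; this is where the hypothesis $p \le q \le 1+p$ enters. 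Integrating against $\mathrm{d}\|F_\lambda(y^\delta-y)\|^2$ and invoking \eqref{eq:etadef} yields the claim.

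For the exact data bound \eqref{hdexactupperbound}, I would convert the spectral measure of $y = Tx^\dagger$ with respect to $TT^\ast$ into a spectral measure of $\omega$ with respect to $T^\ast T$, using the intertwining relation $F_\lambda T = T E_\lambda$. This gives $\mathrm{d}\|F_\lambda y\|^2 = \lambda\,\mathrm{d}\|E_\lambda x^\dagger\|^2$, and the source condition \eqref{sourcecond} further yields $\mathrm{d}\|E_\lambda x^\dagger\|^2 = \lambda^{2\mu}\,\mathrm{d}\|E_\lambda\omega\|^2$. Hence
\[
\psi_{\text{HD}}^2(\alpha,y) = \int_0^{\|T\|^2} \frac{\lambda^{2q+1+2\mu}\,\alpha}{\alpha^{2q}(\lambda+\alpha)^2}\,\mathrm{d}\|E_\lambda\omega\|^2,
\]
and it suffices to bound the integrand by $C\alpha^{2\mu}$. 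Setting $t := q + \tfrac{1}{2} + \mu$, the integrand equals $\alpha^{1-2q}\bigl(\lambda^{t}/(\lambda+\alpha)\bigr)^2$, and \eqref{genin} applied with this $t$ (which lies in $[0,1]$ exactly under the hypothesis $q \le \tfrac{1}{2}-\mu$) gives $\alpha^{1-2q}\cdot C\alpha^{-2(1-t)} = C\alpha^{2\mu}$, as required. Integrating and using $\|\omega\|<\infty$ completes the argument.

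Neither step presents a serious obstacle; the proof is essentially a bookkeeping exercise in matching exponents. The only subtle point is to verify that the hypotheses $p \le q \le 1+p$ and $q \le \tfrac{1}{2}-\mu$ are precisely what is needed to place the relevant exponent in the regime $[0,1]$ where \eqref{genin} produces the sharp, matching power of $\alpha$; outside this range the pointwise estimate would lose a power and fail to recover the stated bounds.
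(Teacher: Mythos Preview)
Your proposal is correct and follows essentially the same approach as the paper's proof: both factor the spectral filter $\Psi_{\alpha,\text{HD}}(\lambda)$ so as to extract $\lambda^{2p}$ (respectively $\lambda^{1+2\mu}$) and then bound the remaining quotient $\lambda^{t}/(\lambda+\alpha)$ via \eqref{genin}, with the hypotheses $p\le q\le 1+p$ and $q\le \tfrac{1}{2}-\mu$ ensuring the exponent $t$ sits in $[0,1]$. The only cosmetic difference is that the paper writes the resulting bound with a $\max$ before specialising to the assumed range, whereas you restrict to that range from the outset.
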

 	\begin{proof}
 	This follows easily from the spectral representation and the inequalities 
 	\begin{align*} &\frac{ \lambda^{2q}}{\alpha^{1 +2q}}\frac{\alpha^2}{(\lambda+\alpha)^2} \leq  
 	 \frac{ 1}{\alpha^{2q-1}}\frac{\lambda^{2(q-q)}}{(\lambda+\alpha)^2} \lambda^{2p} \leq   	
 	C \frac{1}{\alpha^{2q -1 + 2\max\{1-(q-p),0\}}}\lambda^{2p}  \\
 	& = C   \frac{1}{\alpha^{\max\{2  p +1,2 q-1\}}} \lambda^{2p}.
 	\end{align*} 
 	The second result follows from 
 	 	\begin{align*} &\frac{\lambda^{2q}}{\alpha^{1 +2q}} \frac{\alpha^2}{(\lambda+\alpha)^2} \lambda^{1 + 2 \mu} 
 	 	\leq   C \frac{1}{\alpha^{2q-1}} \frac{1}{\alpha^{2\max\{1 - \frac{1}{2} - \mu - q,0\}}} = 
 	 C \frac{1}{\alpha^{\max\{ -2 \mu , 2q-1\}}} = 	C \alpha^{\min\{2 \mu, 1- 2 q\}}.
 	 	\end{align*}
 	\end{proof}
 		\begin{proposition}
 	Let $ q  \leq \frac{1}{2}$. If $Q(y-y^\delta)\in\mathcal{N}_{2 q}$, then
 		\begin{equation}
 			\psi_{\text{HD}}(\alpha,y-y^\delta)\ge\|x^\delta_\alpha-x_\alpha\|, \label{hdnoiselowerbound}
 		\end{equation}
 		for all $\alpha\in(0,\|T\|^2)$.
 	\end{proposition}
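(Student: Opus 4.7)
The plan is to mimic the structure of the proof of Lemma~\ref{lemQOlow}, using the spectral representations
\[
\psi_{\text{HD}}^2(\alpha,e) = \int_0^{\|T\|^2} \frac{\lambda^{2q}\alpha}{\alpha^{2q}(\lambda+\alpha)^2}\,\mathrm{d}\|F_\lambda e\|^2,
\qquad
\|x^\delta_\alpha - x_\alpha\|^2 = \int_0^{\|T\|^2} \frac{\lambda}{(\lambda+\alpha)^2}\,\mathrm{d}\|F_\lambda e\|^2,
\]
with $e := y^\delta - y$. For $q>0$ the spectral factor $\lambda^{2q}$ kills the kernel-part of $TT^\ast$, so both integrals see only the $Q e$-component, which is exactly where the hypothesis $Qe \in \mathcal{N}_{2q}$ applies. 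The strategy is to produce from the full $\psi_{\text{HD}}^2$ the ``core'' quantity $\alpha^{-(2q+1)}\int_0^\alpha \lambda^{2q}\,\mathrm{d}\|F_\lambda e\|^2$ and then bound the data-error integral, split at $\lambda = \alpha$, by constant multiples of this core.

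First I would restrict the $\psi_{\text{HD}}^2$-integral to $[0,\alpha]$ and use $(\lambda+\alpha)^2 \le 4\alpha^2$ there to obtain
\[
\psi_{\text{HD}}^2(\alpha,e) \;\ge\; \frac{1}{4\alpha^{2q+1}} \int_0^\alpha \lambda^{2q}\,\mathrm{d}\|F_\lambda e\|^2.
\]
Next I would split the data-error integral and bound each piece pointwise: on $[0,\alpha]$ use $(\lambda+\alpha)^{-2} \le \alpha^{-2}$, and on $[\alpha,\|T\|^2]$ use $(\lambda+\alpha)^{-2} \le \lambda^{-2}$, giving
\[
\|x^\delta_\alpha - x_\alpha\|^2 \;\le\; \frac{1}{\alpha^2}\int_0^\alpha \lambda\,\mathrm{d}\|F_\lambda e\|^2 + \int_\alpha^{\|T\|^2} \lambda^{-1}\,\mathrm{d}\|F_\lambda e\|^2.
\]

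The crucial inputs come in handling the two pieces. For the $[0,\alpha]$ piece, I would use $q \le \tfrac{1}{2}$, so that $\lambda^{1-2q} \le \alpha^{1-2q}$ on $\lambda\in[0,\alpha]$; this converts $\lambda$ into $\alpha^{1-2q}\lambda^{2q}$, yielding
\[
\frac{1}{\alpha^2}\int_0^\alpha \lambda\,\mathrm{d}\|F_\lambda e\|^2
\;\le\; \frac{1}{\alpha^{2q+1}}\int_0^\alpha \lambda^{2q}\,\mathrm{d}\|F_\lambda e\|^2
\;\le\; 4\,\psi_{\text{HD}}^2(\alpha,e).
\]
For the $[\alpha,\|T\|^2]$ piece, the noise condition $Qe\in\mathcal{N}_{2q}$ (i.e.~$\nu=2q$ in \eqref{defNnu}) gives directly
\[
\int_\alpha^{\|T\|^2}\lambda^{-1}\,\mathrm{d}\|F_\lambda e\|^2
\;\le\; \frac{C}{\alpha^{2q+1}}\int_0^\alpha \lambda^{2q}\,\mathrm{d}\|F_\lambda e\|^2
\;\le\; 4C\,\psi_{\text{HD}}^2(\alpha,e).
\]
Adding these two bounds yields $\|x^\delta_\alpha-x_\alpha\|^2 \le C'\,\psi_{\text{HD}}^2(\alpha,e)$, i.e.~the asserted lower bound (up to a constant, exactly as in Lemma~\ref{lemQOlow}).

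The only point that requires genuine care is the $[0,\alpha]$ first-piece estimate: it is precisely the restriction $q \le \tfrac{1}{2}$ stated in the hypothesis that makes $\lambda \mapsto \lambda^{1-2q}$ nondecreasing on $[0,\alpha]$, so that $\lambda$ can be exchanged against $\lambda^{2q}$ at the cost of a harmless $\alpha^{1-2q}$ factor; without this the whole matching of exponents between the data error and $\psi_{\text{HD}}$ would fail. The $[\alpha,\|T\|^2]$ tail, which is the weakly-bounded-noise-specific obstruction, is then absorbed entirely by the noise condition, exactly as in the quasi-optimality case.
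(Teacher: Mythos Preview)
Your proof is correct and follows essentially the same approach as the paper: both split the data-error integral at $\lambda=\alpha$, bound the low-frequency piece pointwise using $q\le\tfrac12$ (so that $\lambda^{1-2q}\le\alpha^{1-2q}$ on $[0,\alpha]$), and absorb the high-frequency tail via the noise condition $\mathcal{N}_{2q}$, comparing everything against the core quantity $\alpha^{-(2q+1)}\int_0^\alpha \lambda^{2q}\,\mathrm{d}\|F_\lambda Q e\|^2$. The paper additionally records a lower bound for $\psi_{\text{HD}}^2$ on the $[\alpha,\|T\|^2]$ range, but then does not actually use it, so your version is in fact marginally more streamlined.
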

 	\begin{proof}
 	We estimate
 			\begin{equation}
 				\begin{aligned}
 					\psi^2_{\text{HD}}(\alpha,y-y^\delta)&=\frac{1}{\alpha^{2q+1}}\int_0^{\|T\|^2}\lambda^{2q}\frac{\alpha^2}{(\lambda+\alpha)^2}\,\mathrm{d}\|F_\lambda Q(y-y^\delta)\|^2
 					\\
 					&\ge 
 					C \frac{1}{\alpha^{2q+1}}
 					\int_0^\alpha\lambda^{2q}\,\mathrm{d}\|F_\lambda Q(y-y^\delta)\|^2+
 					C 	\frac{1}{\alpha^{2q-1}}	\int_\alpha^{\|T\|^2}\lambda^{2q-2}
 					\mathrm{d}\|F_\lambda Q(y-y^\delta)\|^2, \label{hdnoisebelow}
 				\end{aligned}
 			\end{equation}
 			for all $\alpha\in(0,\|T\|^2)$.
 			
 			Conversely, 
 			\begin{equation} \label{eq:dataupper}
 			\begin{split}
  				&\|x_{\alpha}^\delta - x_\alpha\|^2 =
 				\int_0^\alpha\frac{\lambda}{(\lambda+\alpha)^2}\,\mathrm{d}\|F_\lambda Q(y-y^\delta)\|^2 \\
 				&\leq 
 				C\frac{1}{\alpha}	
 				\int_0^\alpha 	\frac{\lambda}{\alpha}\,
 				\mathrm{d}\|F_\lambda Q(y-y^\delta)\|^2
 				+ C
 				\int_\alpha^{\|T\|^2} \lambda^{-1}	\,
 				\mathrm{d}\|F_\lambda Q(y-y^\delta)\|^2.
 			\end{split}	
 			\end{equation}	
 			Since $2q-1 \leq 0$, we observe that the term 
 			with $\int_0^\alpha$ in the above inequality
 			is bounded by the corresponding term in 
 			\eqref{hdnoisebelow}. 
 			Thus, using the noise condition, the 
 			second term can be bounded by the first one
 			of 	\eqref{hdnoisebelow}.
 	\end{proof}
\begin{theorem}\label{th5}
Let $p\leq q \leq p +1$, $q\le\frac{1}{2}-\mu$ and suppose the noise condition $Q(y-y^\delta)\in\mathcal{N}_{2 q}$. 
Moreover, suppose that $(T T^\ast)^q Q y \not = 0$ and let $x^\dagger$ satisfy \eqref{sourcecond}.
 		Let $\alpha_\ast$ be selected according to the modified heuristic discrepancy rule. 
 		Then for $\eta$ sufficiently small, 
 		\begin{equation*}
 		\|x^\delta_{\alpha_\ast}-x^\dagger\|=\mathcal{O}\left(\eta^{\frac{2\mu}{2\mu+2p+1}\frac{2\mu}{1-2q}}\right).
 		\end{equation*}
 	\end{theorem}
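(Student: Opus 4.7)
The plan is to mimic the structure of the proof of Theorem~\ref{thQO}, adapted to the modified heuristic discrepancy functional. First, I combine the two upper bounds \eqref{hdupperboundnoise} and \eqref{hdexactupperbound} with the triangle inequality
\[
\psi_{\text{HD}}(\alpha,y^\delta)\le \psi_{\text{HD}}(\alpha,y)+\psi_{\text{HD}}(\alpha,y-y^\delta) \le C\alpha^\mu + C\frac{\eta}{\alpha^{p+\frac{1}{2}}},
\]
and use the minimising property of $\alpha_\ast$. Taking the a priori choice $\alpha = \eta^{2/(2\mu+2p+1)}$ balances the two terms, yielding
\[
\psi_{\text{HD}}(\alpha_\ast,y^\delta) \le \psi_{\text{HD}}(\alpha,y^\delta) = \mathcal{O}\bigl(\eta^{\frac{2\mu}{2\mu+2p+1}}\bigr).
\]

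Next, I need a \emph{lower bound} of the form $\psi_{\text{HD}}(\alpha,y^\delta)\ge C\alpha^{(1-2q)/2}$, valid uniformly in $\alpha\in(0,\|T\|^2)$ for $\eta$ small. Analogously to \eqref{qolowlow}, I estimate
\[
\psi_{\text{HD}}^2(\alpha,y^\delta)=\frac{1}{\alpha^{2q+1}}\int_0^{\|T\|^2}\lambda^{2q}\frac{\alpha^2}{(\lambda+\alpha)^2}\,\mathrm{d}\|F_\lambda y^\delta\|^2 \ge \frac{\alpha^{1-2q}}{(2\|T\|^2)^2}\,\|(TT^\ast)^q y^\delta\|^2,
\]
and the triangle inequality together with $(TT^\ast)^q Q y \neq 0$ and $\|(TT^\ast)^q(y^\delta-y)\|\le \|T T^\ast\|^{q-p}\eta$ (which uses $q\ge p$) bounds the right-hand norm from below by a positive constant for $\eta$ sufficiently small. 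Combining this with the upper bound just obtained gives
\[
\alpha_\ast \le C\,\eta^{\frac{2\mu}{2\mu+2p+1}\cdot\frac{2}{1-2q}}.
\]

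The final step is to assemble the total error via the triangle inequality, the lower bound \eqref{hdnoiselowerbound} on the data error term, and once more the triangle inequality on $\psi_{\text{HD}}$:
\[
\|x^\delta_{\alpha_\ast}-x^\dagger\|\le \|x_{\alpha_\ast}-x^\dagger\|+\psi_{\text{HD}}(\alpha_\ast,y-y^\delta) \le C\alpha_\ast^\mu + \psi_{\text{HD}}(\alpha_\ast,y^\delta) + C\alpha_\ast^\mu.
\]
Since $q\le \tfrac{1}{2}-\mu$ implies $\tfrac{2\mu}{1-2q}\le 1$, the bound on $\alpha_\ast^\mu$ dominates (for $\eta<1$) the already obtained rate $\eta^{2\mu/(2\mu+2p+1)}$, producing the stated estimate $\mathcal{O}(\eta^{\frac{2\mu}{2\mu+2p+1}\frac{2\mu}{1-2q}})$.

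The main technical point I expect to wrestle with is the lower bound on $\psi_{\text{HD}}(\alpha,y^\delta)$: one has to verify that the smoothing factor $(TT^\ast)^q$ does not annihilate the signal, which is guaranteed precisely by the non-degeneracy assumption $(TT^\ast)^q Qy\ne 0$, and that the weakly bounded noise perturbation remains small in the $(TT^\ast)^q$-norm thanks to $q\ge p$. Everything else is a direct adaptation of the quasi-optimality argument, with the exponent $1-2q$ (rather than $2$) in the lower bound producing the extra factor $\frac{2\mu}{1-2q}$ in the final rate.
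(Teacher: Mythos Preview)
Your proof is correct and follows essentially the same approach as the paper: both arguments derive the lower bound $\psi_{\text{HD}}(\alpha,y^\delta)\ge C\alpha^{(1-2q)/2}$ from the non-degeneracy assumption $(TT^\ast)^qQy\ne0$ (as in \eqref{qolowlow}), combine it with the minimiser property and the upper bounds \eqref{hdupperboundnoise}, \eqref{hdexactupperbound} to control $\alpha_\ast$, and then assemble the total error via \eqref{hdnoiselowerbound} and the triangle inequality. Your explicit observation that $q\le\tfrac{1}{2}-\mu$ forces $\tfrac{2\mu}{1-2q}\le1$, so that the $\alpha_\ast^\mu$-term dominates, is a nice clarification that the paper leaves implicit.
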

 	\begin{proof}
 		Note that from $(T T^\ast)^q Q y \not = 0$, we may conclude, as in \eqref{qolowlow}, that 
 		\[
 		\alpha_\ast \leq C \left(\psi_{\text{HD}}(\alpha,y^\delta)^{\frac{1}{\frac{1}{2}-q}}\right)
 		 = \mathcal{O} \left(\eta^{\frac{2\mu}{2\mu+2p+1}\frac{2}{1-2q}} \right).
 		\]
 		Then it follows, as above,  from \eqref{hdnoiselowerbound}, \eqref{hdupperboundnoise}, 
 		and \eqref{hdexactupperbound},  that 
 		\begin{align*}
 		&\|x^\delta_{\alpha_\ast}-x^\dagger\|\le\|x_{\alpha_\ast}-x^\dagger\|+\|x_{\alpha_\ast}-x^\delta_{\alpha_\ast}\| 	
 		=\mathcal{O}(\alpha^\mu_\ast+\psi(\alpha_\ast,y-y^\delta)) =  
 		\\
 		&=\mathcal{O}\left(\alpha_\ast^{\mu}+\alpha^{\mu}+\frac{\eta}{\alpha^{\frac{1}{2}+p}}\right)
 		=\mathcal{O}\left(\eta^{\frac{2\mu}{2\mu+2p+1}\frac{2\mu}{1-2q}}+\eta^{\frac{2\mu}{2\mu+2p+1}}\right),
 		\end{align*} 		
 		for $\eta$ sufficiently small. This proves the theorem. 
 	\end{proof}	 
Let us discuss the assumptions in this theorem: the first condition on $q$ is not particularly 
restrictive. However, the requirement $q\le\frac{1}{2}-\mu$ implies that 
$\mu \leq \frac{1}{2}- q$, which means that we obtain a  staturation at 
$\mu=\frac{1}{2}- q$. This is akin to the bounded noise case ($q =0$), where this 
method saturates at $\mu = \frac{1}{2}$. It is well known that a similar phenomenon occurs for the  non-heuristic analogue of this method, namely the discrepancy principle.

Similarly as for the quasi-optimality method, we again only obtain suboptimal rates except 
for the saturation case $\mu = \frac{1}{2}- q$. However, using again a regularity condition, 
we can even prove optimal order  convergence rates for the modified heuristic discrepancy rule. 
\begin{theorem}
 	Let $p\leq q \leq p +1$ and $q\le\frac{1}{2}-\mu$. Assume the noise condition
 	$Q(y-y^\delta)\in\mathcal{N}_{2 q}$, source condition  \eqref{sourcecond}, 
 	and regularity condition \eqref{sourcecondition2} hold. 
 		Furthermore, let $\alpha_\ast$ be selected according to the modified heuristic discrepancy rule. Then
 		\begin{equation*}
 		\|x^\delta_{\alpha_\ast}-x^\dagger\|=\mathcal{O}\left(\eta^{\frac{2\mu}{2\mu+2p+1}}\right).
 		\end{equation*}
 	\end{theorem}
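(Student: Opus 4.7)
The plan is to mirror the optimal-rate proof for the quasi-optimality rule by upgrading the one-sided estimates available for $\psi_{\text{HD}}$ to the two-sided equivalences
\[
C \|x^\delta_\alpha - x_\alpha\| \leq \psi_{\text{HD}}(\alpha, y-y^\delta) \leq C \|x^\delta_\alpha - x_\alpha\|,
\qquad
C \|x_\alpha - x^\dagger\| \leq \psi_{\text{HD}}(\alpha, y) \leq C \|x_\alpha - x^\dagger\|,
\]
valid under the noise condition $Q(y-y^\delta)\in\mathcal{N}_{2q}$ and under the source plus regularity conditions, respectively. Once these are in place, the abstract argument of \cite{kinderabstract}, applied to $\psi_{\text{QO}}$ in the previous theorem, takes over and delivers
\[
\|x^\delta_{\alpha_\ast} - x^\dagger\| \leq C \inf_{\alpha \in (0,\|T\|^2)} \bigl( \|x_\alpha - x^\dagger\| + \|x^\delta_\alpha - x_\alpha\| \bigr),
\]
whereupon evaluating the infimum at the a-priori choice $\alpha \sim \eta^{2/(2\mu+2p+1)}$ via the earlier Proposition yields the asserted rate $\mathcal{O}(\eta^{2\mu/(2\mu+2p+1)})$.

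On the noise side, the lower bound is the content of the previous proposition. For the reverse direction I would split the spectral integral at $\lambda=\alpha$: since $q\le 1/2$, the integrand of $\psi_{\text{HD}}^2(\alpha,y-y^\delta)$ is pointwise comparable to that of $\|x^\delta_\alpha-x_\alpha\|^2$ on $(0,\alpha)$, and on the tail $(\alpha,\|T\|^2)$ the same noise condition \eqref{defNnu} with $\nu=2q$ used earlier transfers the large-$\lambda$ contribution to the already controlled low-$\lambda$ part.

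The harder step is the exact-data equivalence, for which only the upper bound \eqref{hdexactupperbound} is presently available. Substituting the source condition $x^\dagger=(T^\ast T)^\mu\omega$ rewrites both $\psi_{\text{HD}}^2(\alpha,y)$ and $\|x_\alpha-x^\dagger\|^2$ as spectral integrals against $d\|E_\lambda\omega\|^2$ whose integrands match on $(0,\alpha)$ up to constants but where the $\psi_{\text{HD}}^2$-integrand carries the excess factor $(\lambda/\alpha)^{2q+1}$ on the tail $(\alpha,\|T\|^2)$. This pointwise mismatch must be absorbed globally by invoking the regularity condition \eqref{sourcecondition2}, which dominates the excess tail mass by the low-frequency contribution in precisely the way that \cite{kinderabstract} exploited for $\psi_{\text{QO}}$. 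I expect the bookkeeping of this split --- in particular verifying that the constants extracted from \eqref{sourcecondition2} are independent of $\alpha$ and compatible with the $q$-dependent powers --- to be the main obstacle; once it is carried out, the remainder of the argument is a direct transcription of the quasi-optimality proof.
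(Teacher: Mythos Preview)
Your identification of the crucial new ingredient is correct and matches the paper: the regularity condition \eqref{sourcecondition2} yields the lower bound $\psi_{\text{HD}}(\alpha,y)\ge C\|x_\alpha-x^\dagger\|$, and your sketch (restrict to $\lambda\ge\alpha$, where the $\psi_{\text{HD}}^2$-integrand dominates that of the approximation error by the factor $(\lambda/\alpha)^{2q+1}\ge 1$, then use \eqref{sourcecondition2} to absorb the low-frequency part of $\|x_\alpha-x^\dagger\|^2$) is exactly what the paper does.

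Where your plan overreaches is in aiming for the full oracle inequality via \emph{two-sided} equivalences. The upper bounds $\psi_{\text{HD}}(\alpha,y-y^\delta)\le C\|x^\delta_\alpha-x_\alpha\|$ and $\psi_{\text{HD}}(\alpha,y)\le C\|x_\alpha-x^\dagger\|$ are not available, and your sketched arguments for them do not work. For the noise term, the ratio of the $\psi_{\text{HD}}^2$-filter to the data-error filter is $(\alpha/\lambda)^{1-2q}$, which for $q<\tfrac12$ is unbounded as $\lambda\to 0$; so on $(0,\alpha)$ the $\psi_{\text{HD}}^2$-integrand is strictly larger, not ``pointwise comparable'' in the direction you need, and the noise condition $\mathcal{N}_{2q}$ (which controls the tail by the low part, not vice versa) cannot fix this. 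For the exact data, \eqref{hdexactupperbound} gives only $\psi_{\text{HD}}(\alpha,y)\le C\alpha^\mu$, which is weaker than $\le C\|x_\alpha-x^\dagger\|$ since $\alpha^\mu$ is itself merely an upper bound for the approximation error; on the tail the $\psi_{\text{HD}}^2$-integrand again dominates by $(\lambda/\alpha)^{2q+1}$, and \eqref{sourcecondition2} points the wrong way to absorb this excess.

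The paper accordingly does not pass through the oracle inequality. It combines only the new lower bound $\|x_{\alpha_\ast}-x^\dagger\|\le C\psi_{\text{HD}}(\alpha_\ast,y)$ with the already-established lower bound $\|x^\delta_{\alpha_\ast}-x_{\alpha_\ast}\|\le C\psi_{\text{HD}}(\alpha_\ast,y-y^\delta)$ and the rate-type upper bounds $\psi_{\text{HD}}(\alpha,y)\le C\alpha^\mu$, $\psi_{\text{HD}}(\alpha,y-y^\delta)\le C\eta\alpha^{-p-1/2}$. These four estimates, together with the minimising property of $\alpha_\ast$ and a comparison with $\alpha_{\text{opt}}$ as in \cite{kinderquasi}, already deliver the stated rate $\mathcal{O}(\eta^{2\mu/(2\mu+2p+1)})$; the stronger oracle inequality you target is neither claimed nor needed.
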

 \begin{proof} 
 We show that a regularity condition implies that 
$\psi_{\text{HD}}(\alpha,y) \geq C \|x_\alpha -x^\dagger\|$.
Recall that 
\begin{align}\label{sss} \|x_\alpha -x^\dagger\|^2 = 
\int_0^{\|T\|^2} \frac{\alpha^2}{(\alpha +\lambda)^2} \, \mathrm{d} \|E_\lambda x^\dagger\|^2 
\leq 
C \int_0^{\alpha} \, \mathrm{d} \|E_\lambda x^\dagger\|^2 + 
C \alpha^2 \int_{\alpha}^{\|T\|^2} \frac{1}{\lambda^2}\, \mathrm{d} \|E_\lambda x^\dagger\|^2. 
\end{align} 
On the other hand, 
\begin{align*} &\psi^2_{\text{HD}}(\alpha,y) \geq 
\int_0^{\|T\|^2}  \left(\frac{\lambda}{\alpha}\right)^{2 q}  
\frac{\lambda \alpha }{(\lambda + \alpha)^2} \,
 \mathrm{d} \|E_\lambda x^\dagger\|^2  \geq 
\int_\alpha^{\|T\|^2} \left(\frac{\lambda}{\alpha}\right)^{2q} 
\frac{\lambda \alpha }{(\lambda + \alpha)^2} \,
 \mathrm{d} \|E_\lambda x^\dagger\|^2 \\
 &\geq 
  C \int_\alpha^{\|T\|^2} 
 \frac{\alpha}{\lambda} \,
 \mathrm{d} \|E_\lambda x^\dagger\|^2 \geq 
C \int_\alpha^{\|T\|^2} 
 \frac{   \alpha^2}{\lambda^2} \,
 \mathrm{d} \|E_\lambda x^\dagger\|^2. 
 \end{align*}
  By the regularity condition, the first integral in the upper bound in 
 \eqref{sss} can be estimated by the second part which agrees up to a constant with 
 the lower bound for $\psi^2_{\text{HD}}(\alpha,y)$.  In the proof 
 of Theorem~\ref{th5}, the estimate $\|x_{\alpha_\ast} - x^\dagger\| \leq C \alpha_{\ast}^\mu$
 can then be replaced by $\|x_{\alpha_\ast} - x^\dagger\| \leq C \psi(\alpha_\ast,y),$ 
 which leads, as in \cite{kinderquasi}, to the optimal rate.
\end{proof}

\subsection{The Modified Hanke-Raus Rule} 	
 As for the other parameter choice rules, we prove estimates for the modified Hanke-Raus functional with the aim of deriving convergence rates.
	\begin{proposition}
 		Let $p\leq q \leq p + \frac{3}{2}$. Then there exists a positive constant $C$ such that
 		\[
 		\psi_{\text{HR}}(\alpha,y-y^\delta)\le C\frac{\eta}{\alpha^{p+\frac{1}{2}}},
 		\]
 		for all $\alpha\in(0,\|T\|^2)$.
 		Let $q\le 1  -\mu$. Then there exists a positive constant $C$ such that
 		\[
 		\psi_{\text{HR}}(\alpha,y)\le C\alpha^{\mu},
 		\]
 		for all $\alpha\in(0,\|T\|^2)$.
 	\end{proposition}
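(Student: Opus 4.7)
The plan is to reduce both inequalities to pointwise bounds on the spectral filter $\pp{\alpha,\text{HR}}(\lambda)=\lambda^{2q}\alpha^{2}/(\alpha^{2q}(\lambda+\alpha)^{3})$ and then integrate against the appropriate spectral measure. In each case the pointwise comparison boils down to a simple split at $\lambda=\alpha$, and the exponent restrictions $p\leq q\leq p+\tfrac{3}{2}$ and $q\leq 1-\mu$ enter precisely as the conditions needed to make the $\lambda\geq\alpha$ branch close.

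For the noise estimate, I would start from the spectral identity $\eta^{2}=\int_{0}^{\|T\|^{2}}\lambda^{2p}\,\mathrm{d}\|F_{\lambda}(y-y^{\delta})\|^{2}$ and aim for the pointwise comparison $\pp{\alpha,\text{HR}}(\lambda)\leq C\lambda^{2p}/\alpha^{2p+1}$. After cancellation this reduces to showing
\[
\frac{\lambda^{2(q-p)}}{(\lambda+\alpha)^{3}}\leq C\,\alpha^{2(q-p)-3}.
\]
For $\lambda\leq\alpha$ one bounds the numerator by $\alpha^{2(q-p)}$ (using $q\geq p$) and the denominator by $\alpha^{3}$; for $\lambda\geq\alpha$ one bounds the denominator below by $\lambda^{3}$, giving $\lambda^{2(q-p)-3}$, and then the assumption $2(q-p)-3\leq 0$, i.e.\ $q\leq p+\tfrac{3}{2}$, lets one pass back to an $\alpha$-power. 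Integrating and recognizing the resulting integral as $\eta^{2}/\alpha^{2p+1}$ yields the first claim.

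For the bound on $\psi_{\text{HR}}(\alpha,y)$, I would use the source condition $x^{\dagger}=(T^{\ast}T)^{\mu}\omega$ together with the standard change of spectral variable $\mathrm{d}\|F_{\lambda}y\|^{2}=\lambda\,\mathrm{d}\|E_{\lambda}x^{\dagger}\|^{2}=\lambda^{1+2\mu}\,\mathrm{d}\|E_{\lambda}\omega\|^{2}$ to rewrite
\[
\psi^{2}_{\text{HR}}(\alpha,y)=\int_{0}^{\|T\|^{2}}\frac{\lambda^{2q+1+2\mu}\alpha^{2}}{\alpha^{2q}(\lambda+\alpha)^{3}}\,\mathrm{d}\|E_{\lambda}\omega\|^{2}.
\]
Reaching the target bound $C\alpha^{2\mu}\|\omega\|^{2}$ again reduces to a pointwise estimate, this time $\lambda^{t}/(\lambda+\alpha)^{3}\leq C\alpha^{t-3}$ with $t=2q+1+2\mu\geq 0$. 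The branch $\lambda\leq\alpha$ is immediate since $\lambda^{t}\leq\alpha^{t}$ and $(\lambda+\alpha)^{3}\geq\alpha^{3}$, while the branch $\lambda\geq\alpha$ requires $t-3\leq 0$, i.e.\ $q+\mu\leq 1$, which is precisely the hypothesis $q\leq 1-\mu$.

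There is no real conceptual obstacle here; the whole difficulty is bookkeeping of exponents to identify where each threshold assumption enters. One could equivalently derive both estimates by applying \eqref{genin} twice to two of the three factors of $(\lambda+\alpha)^{-1}$, exactly in the style of the proof already given for $\psi_{\text{HD}}$, but the direct case split at $\lambda=\alpha$ makes the roles of the conditions $q\leq p+\tfrac{3}{2}$ and $q\leq 1-\mu$ most transparent.
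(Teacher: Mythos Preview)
Your proof is correct and follows essentially the same approach as the paper. The paper compresses the two $\lambda\lessgtr\alpha$ branches into a single application of the standard estimate \eqref{genin} (in the form $\lambda^{t}/(\lambda+\alpha)^{3}\leq C\alpha^{-\max\{3-t,0\}}$), which is exactly what your explicit case split at $\lambda=\alpha$ unpacks; you yourself note this equivalence at the end.
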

 	\begin{proof}
In terms of filter functions, we have 
\begin{align*}
\frac{\lambda^{2q}\alpha^{2-2q}}{(\lambda+\alpha)^3}  = 
\frac{\lambda^{2(q-p)}\alpha^{2-2q}}{(\lambda+\alpha)^3} \lambda^{2p} \leq 
C  \frac{\alpha^{2-2q}}{\alpha^{\max\{3 - 2(q-p),0\}}}\lambda^{2p} \leq 
C  \frac{1}{\alpha^{\max\{1 +2p,2q-2\}}}\lambda^{2p},
\end{align*} 
which leads to the first estimate. 
The second statement follows from 
\begin{align*} \frac{\lambda^{2q}\alpha^{2-2q}}{(\lambda+\alpha)^3} \lambda^{1+2\mu} 
\leq C  \frac{\alpha^{2-2q}}{\alpha^{\max\{3 - (1 +2 \mu + 2q),0\}  }} 
\leq C  \frac{1}{\alpha^{\max\{-2 \mu,2q-2\}  }}. 
\end{align*}
 	\end{proof}
The lower bound for the noise propagation again  requires a noise condition. 
 		\begin{proposition}
 		Let $Q(y-y^\delta)\in\mathcal{N}_{2q}$. Then there exists a positive constant $C$ such that
 		\[\psi_{\text{HR}}(\alpha,y-y^\delta)\ge C\|x^\delta_\alpha-x_\alpha\|, \]
 		for all $\alpha\in(0,\|T\|^2)$.
 	\end{proposition}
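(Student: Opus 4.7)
The plan is to mirror the structure of the modified heuristic discrepancy proof. First I would write out $\psi_{\text{HR}}^2(\alpha,y-y^\delta)$ via its spectral representation
\[
\psi_{\text{HR}}^2(\alpha,y-y^\delta)=\int_0^{\|T\|^2} \frac{\lambda^{2q}\alpha^{2-2q}}{(\lambda+\alpha)^3}\,\mathrm{d}\|F_\lambda Q(y-y^\delta)\|^2,
\]
and split the integral into the low-frequency part $[0,\alpha]$ and the high-frequency part $[\alpha,\|T\|^2]$. On $[0,\alpha]$, $(\lambda+\alpha)^3\le 8\alpha^3$ yields a lower bound of the form $C\alpha^{-(2q+1)}\int_0^\alpha \lambda^{2q}\,\mathrm{d}\|F_\lambda Q(y-y^\delta)\|^2$. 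The contribution from $[\alpha,\|T\|^2]$ is nonnegative and, as in the heuristic discrepancy case, I would simply discard it (or retain $C\alpha^{2-2q}\int_\alpha^{\|T\|^2}\lambda^{2q-3}\,\mathrm{d}\|F_\lambda Q(y-y^\delta)\|^2$ for symmetry), since the first piece alone will suffice.

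Next I would expand the data error spectrally as in \eqref{eq:dataupper},
\[
\|x^\delta_\alpha-x_\alpha\|^2
=\int_0^\alpha \frac{\lambda}{(\lambda+\alpha)^2}\,\mathrm{d}\|F_\lambda Q(y-y^\delta)\|^2
+\int_\alpha^{\|T\|^2}\frac{\lambda}{(\lambda+\alpha)^2}\,\mathrm{d}\|F_\lambda Q(y-y^\delta)\|^2,
\]
and dominate each piece by the $[0,\alpha]$ part of the $\psi_{\text{HR}}^2$ lower bound. For the low-frequency integral I would use $\frac{\lambda}{(\lambda+\alpha)^2}\le \frac{\lambda}{\alpha^2}$ and the elementary inequality $\lambda=\lambda^{2q}\lambda^{1-2q}\le \alpha^{1-2q}\lambda^{2q}$ on $[0,\alpha]$, producing precisely $C\alpha^{-(2q+1)}\int_0^\alpha \lambda^{2q}\,\mathrm{d}\|F_\lambda Q(y-y^\delta)\|^2$. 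For the high-frequency integral I would use $\frac{\lambda}{(\lambda+\alpha)^2}\le \frac{1}{\lambda}$ and invoke the noise condition $Q(y-y^\delta)\in\mathcal{N}_{2q}$, i.e.~\eqref{defNnu} with $\nu=2q$, to convert $\int_\alpha^{\|T\|^2}\lambda^{-1}\,\mathrm{d}\|F_\lambda Q(y-y^\delta)\|^2$ into $C\alpha^{-(2q+1)}\int_0^\alpha\lambda^{2q}\,\mathrm{d}\|F_\lambda Q(y-y^\delta)\|^2$, which again matches the lower bound up to a constant. Combining the two pieces yields $\|x^\delta_\alpha-x_\alpha\|^2\le C\,\psi_{\text{HR}}^2(\alpha,y-y^\delta)$.

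The main obstacle is exponent matching on the low-frequency piece: the manipulation $\lambda\le \alpha^{1-2q}\lambda^{2q}$ for $\lambda\in[0,\alpha]$ requires $2q\le 1$, exactly as in the proof for the modified heuristic discrepancy functional. I would therefore either assume this implicitly (as seems consistent with the preceding section) or, if one wishes to relax it, use the otherwise-discarded high-frequency term $C\alpha^{2-2q}\int_\alpha^{\|T\|^2}\lambda^{2q-3}\,\mathrm{d}\|F_\lambda Q(y-y^\delta)\|^2$ to cover the low-frequency data error in the regime $q>\tfrac{1}{2}$, reshuffling the split accordingly. Apart from this dyadic bookkeeping, the argument is routine.
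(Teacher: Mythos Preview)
Your proposal is correct and follows essentially the same route as the paper: both arguments lower-bound $\psi_{\text{HR}}^2$ by the low-frequency piece $C\alpha^{-(2q+1)}\int_0^\alpha\lambda^{2q}\,\mathrm{d}\|F_\lambda Q(y-y^\delta)\|^2$, invoke the data-error decomposition \eqref{eq:dataupper}, and use $\mathcal{N}_{2q}$ to absorb the high-frequency data error into that low-frequency term. You even correctly flag the implicit restriction $2q\le 1$ needed for the low-frequency comparison, which the paper states in the heuristic-discrepancy proposition but silently carries over here.
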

 	\begin{proof}
We estimate   	
\[ \frac{ \lambda^{2q}}{\alpha^{2q}}\frac{\alpha^2}{(\lambda+\alpha)^3} \geq 
C \begin{dcases} \frac{\lambda^{2q}}{\alpha^{2q+1}} &\mbox{if } \lambda \leq \alpha, \\ 
\frac{\lambda^{2q-3}}{\alpha^{2q-2}}
&\mbox{if } \lambda \geq \alpha. \end{dcases} 
\] 
Now, using $\mathcal{N}_{2q}$ and \eqref{eq:dataupper}, we can estimate $\|x^\delta_\alpha-x_\alpha\|$ by the part of
 $\psi_{\text{HR}}(\alpha,y-y^\delta)$ restricted to $\lambda \leq \alpha$. The part for $\lambda \geq \alpha$ can then be estimated from below by $0$. 
\end{proof}
	\begin{theorem}\label{MHRth}
	Let $p\leq q \leq p + \frac{3}{2}$ and $q \leq 1 -\mu$. 
 		Moreover, suppose  $Q(y-y^\delta)\in\mathcal{N}_{2 q}$, 
 		$ (TT^\ast)^q Q y \not = 0$ and let $x^\dagger$ satisfy
 		\eqref{sourcecond}. Let $\alpha_\ast$ be chosen 
 		according to the modified Hanke-Raus rule. 
 		Then
 		\[
 		\|x^\delta_{\alpha_\ast}-x^\dagger\|=\mathcal{O}\left(\eta^{\frac{2\mu}{2\mu+2p+1}\frac{\mu}{1-q}} \right).
 		\]
 	\end{theorem}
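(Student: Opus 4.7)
The plan is to mimic almost verbatim the argument used for Theorem~\ref{th5}, with the appropriate adjustment of exponents stemming from the Hanke–Raus filter function $\pp{\alpha,\mathrm{HR}}$ rather than $\pp{\alpha,\mathrm{HD}}$. The three ingredients already in place are: the upper bound $\psi_{\mathrm{HR}}(\alpha,y-y^\delta)\le C\eta/\alpha^{p+\frac12}$, the bound $\psi_{\mathrm{HR}}(\alpha,y)\le C\alpha^{\mu}$ on the exact-data part, and, under the noise condition $Q(y-y^\delta)\in\mathcal{N}_{2q}$, the data-error estimate $\|x^\delta_\alpha-x_\alpha\|\le C\psi_{\mathrm{HR}}(\alpha,y-y^\delta)$. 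What remains is to couple these with a lower bound for $\psi_{\mathrm{HR}}(\alpha,y^\delta)$ in $\alpha$.

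First I would derive the lower bound on $\psi_{\mathrm{HR}}(\alpha,y^\delta)$ by observing that, as in \eqref{qolowlow}, one can estimate $(\lambda+\alpha)^{-3}\ge C$ for $\lambda\in(0,\|T\|^2)$, so that
\[
\psi_{\mathrm{HR}}^2(\alpha,y^\delta)\ge C\alpha^{2-2q}\int_0^{\|T\|^2}\lambda^{2q}\,\mathrm{d}\|F_\lambda y^\delta\|^2 = C\alpha^{2-2q}\|(TT^\ast)^q y^\delta\|^2.
\]
Using the weakly bounded noise assumption and $(TT^\ast)^qQy\ne 0$, a reverse triangle inequality on $\|(TT^\ast)^q y^\delta\|$ yields a strictly positive constant for $\eta$ small, so that $\psi_{\mathrm{HR}}(\alpha,y^\delta)\ge C\alpha^{1-q}$.

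Next, I would use the minimality of $\alpha_\ast$ against the a-priori optimal choice $\alpha=\eta^{2/(2\mu+2p+1)}$. By the triangle inequality and the propositions,
\[
\psi_{\mathrm{HR}}(\alpha_\ast,y^\delta)\le \psi_{\mathrm{HR}}(\alpha,y^\delta)\le \psi_{\mathrm{HR}}(\alpha,y-y^\delta)+\psi_{\mathrm{HR}}(\alpha,y)\le C\Bigl(\tfrac{\eta}{\alpha^{p+\frac12}}+\alpha^\mu\Bigr)=\mathcal{O}\bigl(\eta^{\frac{2\mu}{2\mu+2p+1}}\bigr).
\]
Combined with the lower bound of the previous step evaluated at $\alpha_\ast$, this gives
\[
\alpha_\ast\le C\,\eta^{\frac{2\mu}{(2\mu+2p+1)(1-q)}}.
\]

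Finally, I would assemble the total error via the triangle inequality $\|x^\delta_{\alpha_\ast}-x^\dagger\|\le \|x_{\alpha_\ast}-x^\dagger\|+\|x^\delta_{\alpha_\ast}-x_{\alpha_\ast}\|$. The first term is $\mathcal{O}(\alpha_\ast^\mu)$ by \eqref{ratex}; the second is controlled by $C\psi_{\mathrm{HR}}(\alpha_\ast,y-y^\delta)\le C\bigl(\psi_{\mathrm{HR}}(\alpha_\ast,y^\delta)+\psi_{\mathrm{HR}}(\alpha_\ast,y)\bigr)=\mathcal{O}\bigl(\eta^{\frac{2\mu}{2\mu+2p+1}}+\alpha_\ast^\mu\bigr)$. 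Substituting the upper bound on $\alpha_\ast$ then yields $\alpha_\ast^\mu=\mathcal{O}\bigl(\eta^{\frac{2\mu}{2\mu+2p+1}\frac{\mu}{1-q}}\bigr)$, and since $q\le 1-\mu$ ensures $\frac{\mu}{1-q}\le 1$, this term dominates $\eta^{\frac{2\mu}{2\mu+2p+1}}$, giving the claimed rate. I do not anticipate a genuine obstacle here; the only delicate point is checking carefully that the range $p\le q\le p+\tfrac32$ is what is needed for the upper bound on $\psi_{\mathrm{HR}}(\alpha,y-y^\delta)$ via $\max\{1+2p,2q-2\}=1+2p$, and that $q\le 1-\mu$ simultaneously secures both the a-priori bound on $\psi_{\mathrm{HR}}(\alpha,y)$ and the dominance of the $\alpha_\ast^\mu$ term, so the two hypotheses are precisely those we need and nothing more.
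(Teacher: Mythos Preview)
Your proposal is correct and follows essentially the same approach as the paper's proof: establish the lower bound $\psi_{\mathrm{HR}}(\alpha,y^\delta)\ge C\alpha^{1-q}$ via the analogue of \eqref{qolowlow}, use minimality against the a-priori optimal $\alpha$ to control $\psi_{\mathrm{HR}}(\alpha_\ast,y^\delta)$ and hence $\alpha_\ast$, and then combine with the noise-condition lower bound and the upper bounds for $\psi_{\mathrm{HR}}(\alpha,y)$ and $\psi_{\mathrm{HR}}(\alpha,y-y^\delta)$ exactly as in Theorem~\ref{th5}. Your explicit verification that $q\le 1-\mu$ makes the $\alpha_\ast^\mu$ term the dominant one is a useful detail the paper leaves implicit.
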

 	\begin{proof}
 	As in \eqref{qolowlow}, 
 		 we prove that if  $\|(TT^\ast)^q Q y\|\ge C$, then
		\[
 		\alpha_\ast\le C\psi_{\text{HR}}(\alpha_\ast,y^\delta)^{\frac{1}{1-q}}.
 		\] 		
 		Therefore,
 		\begin{align*}
 		&\|x^\delta_{\alpha_\ast}-x^\dagger\|\le\|x^\delta_{\alpha_\ast}-x_{\alpha_\ast}\|+\|x_{\alpha_\ast}+x^\dagger\|
 		=\mathcal{O}\left(\alpha_\ast^\mu+\psi_{\text{HR}}(\alpha,y)+\psi_{\text{HR}}(\alpha,y-y^\delta) \right)
 		\\
 		&=\mathcal{O}\left(\psi_{\text{HR}}(\alpha,y^\delta)^{\frac{\mu}{1-q}}+\eta^{\frac{2\mu}{2\mu+2p+1}} \right)
 		=\mathcal{O}\left(\eta^{\frac{2\mu}{2\mu+2p+1}\frac{\mu}{1-q}} \right).
 		\end{align*}

 	\end{proof}
In contrast to the modified discrepancy rule, we observe that the saturation occurs 
at $\mu = 1 - q$. Hence, again analogous to the bounded noise case (and to the 
non-heuristic case), the modified Hanke-Raus method yields convergence rates for a wider 
range of smoothness classes.  The observed suboptimal rates for the 
non-saturation case can again be handled by a regularity condition. 

\begin{theorem}
Suppose that, in addition to the assumptions in Theorem~\ref{MHRth}, the 
 regularity condition \eqref{sourcecondition2} holds.  
 Then, 
 		\begin{equation*}
 		\|x^\delta_{\alpha_\ast}-x^\dagger\|=\mathcal{O}\left(\eta^{\frac{2\mu}{2\mu+2p+1}}\right).
 		\end{equation*}
 	\end{theorem}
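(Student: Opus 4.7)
The plan is to mimic the optimal-rate argument already used for the modified heuristic discrepancy rule. The key missing ingredient is a lower bound of the form $\psi_{\text{HR}}(\alpha,y)\ge C\,\|x_\alpha-x^\dagger\|$ whenever the regularity condition \eqref{sourcecondition2} holds. Once this is established, we can replace the crude bound $\|x_{\alpha_\ast}-x^\dagger\|\le C\alpha_\ast^\mu$ used in the proof of Theorem~\ref{MHRth} by $\|x_{\alpha_\ast}-x^\dagger\|\le C\psi_{\text{HR}}(\alpha_\ast,y)$, and the minimality of $\alpha_\ast$ together with the known upper estimates will then deliver the optimal rate.

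The first step is a spectral computation for $\psi_{\text{HR}}(\alpha,y)$. Using $y=TT^\dagger y$ (equivalently, passing to $Qy$) and the standard correspondence $\mathrm{d}\|F_\lambda y\|^2=\lambda\,\mathrm{d}\|E_\lambda x^\dagger\|^2$, I would write
\[
\psi^2_{\text{HR}}(\alpha,y)=\int_0^{\|T\|^2}\frac{\lambda^{2q+1}\alpha^{2-2q}}{(\lambda+\alpha)^3}\,\mathrm{d}\|E_\lambda x^\dagger\|^2.
\]
Restricting to the region $\lambda\ge\alpha$ and using $(\lambda+\alpha)\le 2\lambda$ there, one gets a lower bound of the order $\frac{\alpha^{2-2q}}{\lambda^{2-2q}}\,\mathrm{d}\|E_\lambda x^\dagger\|^2$. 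Since $q\le 1$ and $\lambda\ge\alpha$, the elementary inequality $(\lambda/\alpha)^{2q}\ge 1$ yields $\frac{\alpha^{2-2q}}{\lambda^{2-2q}}\ge \frac{\alpha^2}{\lambda^2}$, so
\[
\psi^2_{\text{HR}}(\alpha,y)\ge C\alpha^2\int_\alpha^{\|T\|^2}\frac{1}{\lambda^2}\,\mathrm{d}\|E_\lambda x^\dagger\|^2.
\]

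Next, I would reproduce the splitting \eqref{sss} for $\|x_\alpha-x^\dagger\|^2$ into the pieces on $(0,\alpha)$ and $[\alpha,\|T\|^2)$. The second piece is already controlled by the lower bound above. For the first piece, the regularity condition \eqref{sourcecondition2} gives $\int_0^\alpha\mathrm{d}\|E_\lambda x^\dagger\|^2\le C\alpha^2\int_\alpha^{\|T\|^2}\lambda^{-2}\,\mathrm{d}\|E_\lambda x^\dagger\|^2$, which is also dominated by $\psi^2_{\text{HR}}(\alpha,y)$ up to a constant. Combining the two pieces yields the desired inequality $\|x_\alpha-x^\dagger\|\le C\psi_{\text{HR}}(\alpha,y)$.

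Finally, using the already-proved upper bound $\psi_{\text{HR}}(\alpha,y-y^\delta)\le C\eta\alpha^{-(p+1/2)}$, the lower bound $\psi_{\text{HR}}(\alpha,y-y^\delta)\ge C\|x_\alpha^\delta-x_\alpha\|$, and the minimising property of $\alpha_\ast$, I would follow \cite{kinderquasi,kinderabstract} almost verbatim: write
\[
\|x^\delta_{\alpha_\ast}-x^\dagger\|\le\|x_{\alpha_\ast}-x^\dagger\|+\|x^\delta_{\alpha_\ast}-x_{\alpha_\ast}\|\le C\bigl(\psi_{\text{HR}}(\alpha_\ast,y)+\psi_{\text{HR}}(\alpha_\ast,y-y^\delta)\bigr),
\]
bound both terms by $\psi_{\text{HR}}(\alpha_\ast,y^\delta)\le\psi_{\text{HR}}(\alpha,y^\delta)$ for the a-priori choice $\alpha=\eta^{2/(2\mu+2p+1)}$, and then estimate $\psi_{\text{HR}}(\alpha,y^\delta)\le\psi_{\text{HR}}(\alpha,y)+\psi_{\text{HR}}(\alpha,y-y^\delta)\le C\alpha^\mu+C\eta\alpha^{-(p+1/2)}=\mathcal{O}(\eta^{2\mu/(2\mu+2p+1)})$.

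The main obstacle is really the spectral inequality establishing $\psi_{\text{HR}}(\alpha,y)\ge C\|x_\alpha-x^\dagger\|$; the exponent $2-2q$ appearing in the Hanke–Raus filter makes the comparison with $\alpha^2/\lambda^2$ slightly more delicate than in the modified discrepancy case, but it goes through cleanly because of the constraint $q\le 1-\mu\le 1$. Everything else is a straightforward transcription of the preceding optimal-rate proof.
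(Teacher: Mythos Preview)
Your proposal is correct and follows essentially the same route as the paper: both establish the lower bound $\psi_{\text{HR}}^2(\alpha,y)\ge C\alpha^2\int_\alpha^{\|T\|^2}\lambda^{-2}\,\mathrm{d}\|E_\lambda x^\dagger\|^2$ by restricting to $\lambda\ge\alpha$ and using $(\lambda/\alpha)^{2q}\ge 1$, then invoke \eqref{sss} and the regularity condition to conclude $\psi_{\text{HR}}(\alpha,y)\ge C\|x_\alpha-x^\dagger\|$, after which the optimal rate follows as in the discrepancy case. One small remark: the inequality $(\lambda/\alpha)^{2q}\ge 1$ on $\lambda\ge\alpha$ relies on $q\ge 0$ (which holds since $q\ge p\ge 0$), not on $q\le 1$ as you wrote.
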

 \begin{proof}
Similarly as for the modified discrepancy rule, we estimate  
\begin{align*} &\psi^2_{\text{HR}}(\alpha,y) \geq 
\int_0^{\|T\|^2}  \frac{\lambda^{2q+1} \alpha^{2-2q}}{(\lambda + \alpha)^3} 
 \, \mathrm{d} \|E_\lambda x^\dagger\|^2  \geq 
 \alpha^2 \int_\alpha^{\|T\|^2} \left(\frac{\lambda}{\alpha}\right)^{2q} 
 \frac{\lambda} {(\lambda + \alpha)^3} 
 \, \mathrm{d} \|E_\lambda x^\dagger\|^2 \\
 &\geq 
  \alpha^2 \int_\alpha^{\|T\|^2} 
 \frac{\lambda}{\lambda^3} 
 \, \mathrm{d} \|E_\lambda x^\dagger\|^2 \geq 
   \alpha^2 \int_\alpha^{\|T\|^2} 
 \frac{1}{\lambda^2} 
 \, \mathrm{d} \|E_\lambda x^\dagger\|^2. 
 \end{align*}
As before, combined with the regularity condition, this 
allows to conclude that  $\psi_{\text{HR}}(\alpha,y) \geq C \|x_\alpha - x^\dagger\|$,
and the rest of the proof follows similarly as for the modified heuristic discrepancy case. 
\end{proof} 

\section{PMS and GCV}\label{sec:three}
In this section we study the generalised cross-validation and its 
infinite-dimensional analogue, the  predictive mean-square error method, 
in a deterministic framework.

\subsection{The Predictive Mean-Square Error}
The predictive mean-square error functional differs from the 
previous ones in the sense that it has different upper bounds. 
In fact, from \eqref{rateT}, one immediately finds that 
\[ 		\psi_{\text{PMS}}^2(\alpha,y^\delta) \leq 
C \frac{\eta^2}{\alpha^{2p}} + C\alpha^{2 \mu+1}, \] 
for $\mu \leq \frac{1}{2}$.  
The minimum of the upper bound is again obtained for $\alpha = \alpha_{\text{opt}}
= \mathcal{O}(\eta^{\frac{2}{2 p + 2 \mu+1}})$, 
but the resulting rate is of the order 
\[ 		\psi_{\text{PMS}}^2(\alpha,y^\delta) \leq C \left[\eta^{\frac{(2 \mu+1)}{2 p + 2 \mu+1}}\right]^2,
\]
which agrees with the optimal rate for the error in the $T$-norm, 
$\|x^\delta_\alpha-x^\dagger\|_T:=\|T(x_\alpha^\delta - x^\dagger)\|$. Thus, for this method, it is not reasonable to 
bound the functional $\psi_{\text{PMS}}$ by expressions involving 
$\|x_{\alpha}^\delta - x_{\alpha}\|$ or $\|x_{\alpha} - x^\dagger\|$. 
Rather, we try to directly relate  the selected regularisation parameter $\alpha_\ast$
to the optimal choice $\alpha_{\text{opt}}$.

To do so, we need some estimates from below, 
	although in this case, we will need to introduce a noise condition of a different type and an additional condition on the exact solution.
 	\begin{lemma}\label{lemma2}
 		Suppose that there exists a positive constant $C$ such that $y^\delta-y\in Y$ satisfies
 		\begin{equation}\label{regcon}  
 		 \int_{\alpha}^{\|T\|^2} \mathrm{d}\|F_\lambda Q(y-y^\delta)\|^2 \geq C  \frac{\eta^2}{\alpha^{2p-\varepsilon}}, 
 		\end{equation}
%
 		for all $\alpha\in(0,\|T\|^2)$ and $\varepsilon>0$ small. 		
 		Then
 		\[
 		\begin{aligned}
 		\|T(x^\delta_\alpha-x_\alpha)\|&\ge C\frac{\eta}{\alpha^{p-\frac{\varepsilon}{2}}}.
  		\end{aligned}
  		\]
 	\end{lemma}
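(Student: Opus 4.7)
The plan is to express $T(x^\delta_\alpha - x_\alpha)$ spectrally and then restrict the integration to the region where the noise condition \eqref{regcon} supplies a useful lower bound. Specifically, from $x^\delta_\alpha - x_\alpha = (T^\ast T + \alpha I)^{-1} T^\ast (y^\delta-y)$ and the standard commutation with the spectral family $\{F_\lambda\}$ of $TT^\ast$, one has
\[
\|T(x^\delta_\alpha - x_\alpha)\|^2 = \int_0^{\|T\|^2} \frac{\lambda^2}{(\lambda+\alpha)^2} \, \mathrm{d}\|F_\lambda Q(y-y^\delta)\|^2,
\]
where, as elsewhere in the paper, we may replace $y-y^\delta$ by $Q(y-y^\delta)$ since the orthogonal complement does not contribute.

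The key step is to discard the integral over $[0,\alpha]$ (which is nonnegative) and observe that on $[\alpha,\|T\|^2]$ the filter factor satisfies $\frac{\lambda^2}{(\lambda+\alpha)^2} \geq \frac{1}{4}$, because $\lambda+\alpha \leq 2\lambda$ there. This gives
\[
\|T(x^\delta_\alpha - x_\alpha)\|^2 \geq \frac{1}{4} \int_{\alpha}^{\|T\|^2} \mathrm{d}\|F_\lambda Q(y-y^\delta)\|^2.
\]
Now I would plug in the hypothesis \eqref{regcon}, which exactly bounds this tail integral below by $C\eta^2/\alpha^{2p-\varepsilon}$. Taking square roots yields the claim $\|T(x^\delta_\alpha - x_\alpha)\| \geq C \eta/\alpha^{p-\varepsilon/2}$.

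There is essentially no obstacle here: the only substantive input is the noise condition \eqref{regcon}, and the filter factor $\lambda^2/(\lambda+\alpha)^2$ has the convenient feature that it is of order one in precisely the spectral range on which \eqref{regcon} is formulated, so no case analysis on $p$ or further manipulation of the filter is needed.
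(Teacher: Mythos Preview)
Your proposal is correct and follows exactly the same route as the paper's proof: write the spectral representation of $\|T(x^\delta_\alpha-x_\alpha)\|^2$, restrict to $\lambda\ge\alpha$, use that the filter $\lambda^2/(\lambda+\alpha)^2$ is bounded below by a constant there, and apply \eqref{regcon}. The paper's version simply suppresses the explicit factor $\tfrac14$ and absorbs it into the generic constant $C$.
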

\begin{proof}
From \eqref{regcon}, one can estimate
\begin{align*}
	\|T(x^\delta_\alpha-x_\alpha)\|^2 
	= \int_0^{\|T\|^2} \frac{\lambda^2}{(\alpha+\lambda)^2} \, 
	 \mathrm{d}\|F_\lambda Q(y-y^\delta)\|^2 \geq  
		\int_\alpha^{\|T\|^2}   \, 
	 \mathrm{d}\|F_\lambda Q(y-y^\delta)\|^2 \geq   
C  \frac{\eta^2}{\alpha^{2p-\varepsilon}}. 	
\end{align*}
\end{proof}

Let us exemplify condition \eqref{regcon}: for the case in \eqref{examp}, we have that 
\[ 
	 \int_{\lambda\geq\alpha} \mathrm{d}\|F_\lambda Q(y-y^\delta)\|^2 
	 = \sum_{1\leq i \leq N_*} \frac{1}{i^\beta} \sim 
	 \int_1^{N_*}  \frac{1}{x^\beta}\, \mathrm{d}x = 
	 \begin{cases} C N_*^{1-\beta} &\mbox{if } 1-\beta >0, \\ 
	               C &\mbox{if } 1-\beta <0, \end{cases} 
\]
with $N_* = \frac{1}{\alpha^\frac{1}{\gamma}}$. This gives that the left-hand side 
is of the order of ${\alpha^{-\frac{1-\beta}{\gamma}}}$. For \eqref{regcon} to hold 
true, we require that $\frac{1-\beta}{\gamma} \geq {2p-\varepsilon}$, which means that 
\[ 1 + \varepsilon \gamma \geq \beta + 2 p \gamma. \] 
If we now choose $p$ close to the smallest admissible exponent for the 
weakly bounded noise condition, i.e. $2 p \gamma = 1-\beta +\varepsilon \gamma$,
with $\varepsilon$ small, then the condition holds. In other words, our interpretation 
of the stated 
noise condition means that $\|(TT^\ast)^{p}(y^\delta-y)\|< \infty$ and 
$p$ is selected as the minimal exponent such that  this holds. This noise 
condition automatically excludes the (strongly) bounded noise case. It can easily be seen that for strongly bounded noise  $\|y^\delta -y\| <\infty$, the method 
fails as it selects $\alpha_\ast = 0$.
The example also shows that the desired inequality with $\varepsilon =0$
cannot be achieved. 


	\begin{theorem}
	Let $\mu \leq \frac{1}{2}$,  $\alpha_\ast$ be the minimiser of $\psi_{\text{PMS}}(\alpha,y^\delta)$, assume that the noise satisfies \eqref{regcon} and that $T x^\dagger \not = 0$. 
 		Then
 		\[
 		\|x^\delta_{\alpha_\ast}-x^\dagger\|\le
 		\begin{cases}
 		C\eta^{\frac{2\mu}{2\mu+2p+1} \frac{2 \mu+1}{2}},&\mbox{if }\alpha_\ast\ge\alpha_{\text{opt}},
 		\\
 		C\eta^{\frac{2\mu}{2 \mu + 2 p+1} - \epsilon \frac{2p+1}{(2 p-\epsilon)(2 \mu +2 p+1)}},
 		&\mbox{if }\alpha_\ast\le\alpha_{\text{opt}}.
 		\end{cases}
 		\]
If additionally for some $\epss >0$,
\begin{equation}\label{xxx}     \int_{\alpha}^{\|T\|^2} \lambda^{2\mu-1} 	 \,\mathrm{d}\|E_\lambda \omega \|^2 \geq C  \alpha^{2 \mu-1 + \epss}, \end{equation} 
then for the first case we have 
\[ 	\|x^\delta_{\alpha_\ast}-x^\dagger\|\le
 		C\eta^{\frac{2\mu}{2\mu+2p+1} \frac{2 \mu+1}{2 \mu +1 + \epss}}, \qquad \mbox{if }\alpha_\ast\ge\alpha_{\text{opt}}, \]

 	\end{theorem}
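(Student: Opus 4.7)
The plan is to sandwich $\psi_{\text{PMS}}(\alpha_\ast,y^\delta)$ between an upper bound (from the minimising property) and a case-dependent lower bound, then invert these estimates to localise $\alpha_\ast$ relative to $\alpha_{\text{opt}}$. Combined with the standard decomposition $\|x^\delta_\alpha-x^\dagger\|\le C\alpha^\mu + C\eta/\alpha^{p+\frac{1}{2}}$, this produces both rates. The upper bound is immediate: since $\alpha_\ast$ is the minimiser, the earlier upper bound on $\psi_{\text{PMS}}$ at $\alpha_{\text{opt}}$ gives $\psi_{\text{PMS}}(\alpha_\ast,y^\delta) \leq \psi_{\text{PMS}}(\alpha_{\text{opt}},y^\delta) \leq C\eta^{\frac{2\mu+1}{2\mu+2p+1}}$.

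For the case $\alpha_\ast \geq \alpha_{\text{opt}}$, the total error is dominated by $C\alpha_\ast^\mu$, so I need only bound $\alpha_\ast$ from above. I would establish the lower bound
\[
\psi_{\text{PMS}}(\alpha,y^\delta) \geq \|T x_\alpha - y\| - \|T(x^\delta_\alpha - x_\alpha)\| \geq C\alpha - C\frac{\eta}{\alpha^p},
\]
where $\|T x_\alpha - y\|^2 \geq C\alpha^2 \|y\|^2$ follows by bounding $(\lambda+\alpha)^{-2}$ uniformly from below on $[0,\|T\|^2]$ combined with $Tx^\dagger \neq 0$, and the noise part comes from \eqref{rateT}. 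A bootstrap argument concludes: assuming $\alpha_\ast \geq K \eta^{\frac{2\mu+1}{2\mu+2p+1}}$ for sufficiently large $K$, the ratio $\eta/\alpha_\ast^{p+1}$ becomes arbitrarily small (this uses the inequality $(p+1)(2\mu+1) \leq 2\mu+2p+1$, equivalent to $\mu \leq \frac{1}{2}$), yielding $\psi_{\text{PMS}}(\alpha_\ast,y^\delta) \geq C\alpha_\ast/2$, which contradicts the upper bound for $K$ large. Hence $\alpha_\ast \leq C \eta^{\frac{2\mu+1}{2\mu+2p+1}}$, and $\|x^\delta_{\alpha_\ast} - x^\dagger\| \leq C\alpha_\ast^\mu$ produces the claimed rate.

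For the case $\alpha_\ast \leq \alpha_{\text{opt}}$, the total error is dominated by the noise term, so I need a lower bound on $\alpha_\ast$. The triangle inequality
\[
\|T(x^\delta_{\alpha_\ast} - x_{\alpha_\ast})\| \leq \psi_{\text{PMS}}(\alpha_\ast,y^\delta) + \|T x_{\alpha_\ast} - y\|,
\]
combined with Lemma~\ref{lemma2} and the bound $\|T x_{\alpha_\ast} - y\| \leq C\alpha_\ast^{\mu+\frac{1}{2}} \leq C\alpha_{\text{opt}}^{\mu+\frac{1}{2}} = C\eta^{\frac{2\mu+1}{2\mu+2p+1}}$, gives $C\eta/\alpha_\ast^{p-\varepsilon/2} \leq C\eta^{\frac{2\mu+1}{2\mu+2p+1}}$. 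Solving for $\alpha_\ast$ yields $\alpha_\ast \geq C\eta^{\frac{4p}{(2p-\varepsilon)(2\mu+2p+1)}}$, and inserting this into $C\eta/\alpha_\ast^{p+\frac{1}{2}}$ produces, after an algebraic simplification, exactly the rate $\eta^{\frac{2\mu}{2\mu+2p+1}-\varepsilon\frac{2p+1}{(2p-\varepsilon)(2\mu+2p+1)}}$.

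Under the additional condition \eqref{xxx}, I would sharpen the lower bound on $\|T x_\alpha - y\|$: writing $x^\dagger = (T^\ast T)^\mu \omega$ and restricting the spectral integral to $[\alpha,\|T\|^2]$, where $(\lambda+\alpha) \leq 2\lambda$, we obtain
\[
\|T x_\alpha - y\|^2 \geq \frac{\alpha^2}{4} \int_\alpha^{\|T\|^2} \lambda^{2\mu-1}\,\mathrm{d}\|E_\lambda \omega\|^2 \geq C \alpha^{2\mu+1+\epss}.
\]
Replaying the bootstrap of Case~1 with $\psi_{\text{PMS}}(\alpha,y^\delta) \geq C\alpha^{\frac{2\mu+1+\epss}{2}} - C\eta/\alpha^p$ then produces $\alpha_\ast \leq C\eta^{\frac{2(2\mu+1)}{(2\mu+1+\epss)(2\mu+2p+1)}}$ and the sharpened rate. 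The main technical subtlety throughout is the bootstrap step: verifying that the term $\|T x_\alpha - y\|$ dominates the noise propagation $\|T(x^\delta_\alpha - x_\alpha)\|$ in the relevant regime requires careful bookkeeping of the exponents and relies crucially on the saturation constraint $\mu \leq \frac{1}{2}$.
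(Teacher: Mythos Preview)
Your proposal is correct and follows the paper's overall strategy: sandwich $\psi_{\text{PMS}}(\alpha_\ast,y^\delta)$ between the minimising upper bound $C\eta^{\frac{2\mu+1}{2\mu+2p+1}}$ and a case-dependent lower bound to localise $\alpha_\ast$, then insert into \eqref{totaltriangleerror}. Your treatment of the case $\alpha_\ast\le\alpha_{\text{opt}}$ and of the sharpened bound under \eqref{xxx} matches the paper essentially line by line.

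The only genuine difference is in the case $\alpha_\ast\ge\alpha_{\text{opt}}$. The paper avoids your bootstrap entirely: it observes that $\alpha\mapsto\|T(x^\delta_\alpha-x_\alpha)\|$ is monotonically decreasing, so for $\alpha_\ast\ge\alpha_{\text{opt}}$ one has $\|T(x^\delta_{\alpha_\ast}-x_{\alpha_\ast})\|\le\|T(x^\delta_{\alpha_{\text{opt}}}-x_{\alpha_{\text{opt}}})\|$, and then Young's inequality gives directly
\[
C\alpha_\ast^t \le \|Tx_{\alpha_\ast}-y\|^2 \le C\psi_{\text{PMS}}^2(\alpha_\ast,y^\delta)+C\|T(x^\delta_{\alpha_{\text{opt}}}-x_{\alpha_{\text{opt}}})\|^2 \le C\left[\eta^{\frac{2\mu+1}{2\mu+2p+1}}\right]^2.
\]
This is cleaner than your contradiction argument because it does not require separately verifying the exponent inequality $(p+1)(2\mu+1)\le 2\mu+2p+1$; the paper's monotonicity step absorbs that bookkeeping automatically. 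Your bootstrap is nonetheless valid (the inequality you check is indeed equivalent to $\mu\le\tfrac{1}{2}$, with equality at $\mu=\tfrac12$ handled by choosing $K$ large), so both routes reach the same conclusion.
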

 	\begin{proof}
 		If $\alpha_\ast\ge\alpha_{\text{opt}}$, 
 		it follows from $T x^\dagger\not = 0$ that 
 		\[ \|T x_\alpha - y \|^2 \geq C \alpha^2, \] 
 		and 
 		if  \eqref{xxx} holds, then one even has that
 		\[ \|T (x_\alpha - x^\dagger) \|^2 \geq \int_\alpha^{\|T\|^2}  \frac{\lambda^{1 +2 \mu} \alpha^2}{(\alpha +\lambda)^2}
 		 \,\mathrm{d}\|E_\lambda \omega \|^2  \geq \alpha^2 \int_\alpha^{\|T\|^2} \lambda^{2 \mu-1} 	 \,\mathrm{d}\|E_\lambda \omega \|^2
\geq  		 
 		 		C \alpha^{2 \mu+1 +\epss}. \]  		
 		Since $\alpha\mapsto\|T(x^\delta_\alpha-x_\alpha)\|^2$ is a monotonically decreasing function and using Young's inequality, we may obtain that
 		\[ C \alpha_{\ast}^t  \leq
 		\|T(x_{\alpha_{\text{opt}}}^\delta-x_{\alpha_{\text{opt}}})\|^2+ \|Tx_{\alpha_{\text{opt}}}-y\|^2\le C\left[\eta^{\frac{2\mu+1}{2\mu+2p+1}}\right]^2,
 		\]
 		i.e.,
 		\[
 		\alpha_\ast\le C\eta^{\frac{2\mu+1}{2\mu+2p+1} \frac{2}{t}},
 		\]
 		where $t = 2$ or $t = {2 \mu +1 +\epss}$ if \eqref{xxx} holds. 
 		
 		 		If $\alpha_\ast\le\alpha_{\text{opt}}$, then
 		we may bound the functional from below as
 		\[
 		\psi_{\text{PMS}}^2(\alpha,y^\delta)\ge\frac{1}{2}\|T(x^\delta_\alpha-x_\alpha)\|^2-\|Tx_\alpha-y\|^2,
 		\]
 		for all $\alpha\in(0,\|T\|^2)$, which allows us to obtain
 		\begin{align*}
 		&\frac{1}{2}\|T(x^\delta_{\alpha_\ast}-x_{\alpha_\ast})\|^2-
 		\|Tx_{\alpha_\ast}-y\|^2\le\psi_{\text{PMS}}^2(\alpha_\ast,y^\delta)
 		\le\psi_{\text{PMS}}^2(\alpha_{\text{opt}},y^\delta)
 		\\
 		&\le 2\|T(x^\delta_{\alpha_{\text{opt}}}-x_{\alpha_{\text{opt}}})\|^2+2\|Tx_{\alpha_{\text{opt}}}-y\|^2
 		\le C\left[\eta^{\frac{2\mu+1}{2\mu+2p+1}}\right]^2.
 		\end{align*}
 		i.e., by Lemma~\ref{lemma2}, 
 		\[
 		C\frac{\eta^2}{\alpha_\ast^{2p-\varepsilon}}-C\alpha_\ast^{2\mu+1}\le\frac{1}{2}\|T(x^\delta_{\alpha_\ast}-x_{\alpha_\ast})\|^2-\|T_{\alpha_\ast}-y\|^2\le C\left[\eta^{\frac{2\mu+1}{2\mu+2p+1}}\right]^2.
 		\] 	
 		Now, from  $\alpha_\ast\le\alpha_{\text{opt}}$, we get
 		\begin{align*}
 		C\frac{\eta^2}{\alpha_\ast^{2p-\varepsilon}}&\le C\left[\eta^{\frac{2\mu+1}{2\mu+2p+1}}\right]^2+C\alpha_\ast^{2\mu+1}
 		\le C\left[\eta^{\frac{2\mu+1}{2\mu+2p+1}}\right]^2+C\alpha_{\text{opt}}^{2\mu+1}
 		\le C\left[\eta^{\frac{2\mu+1}{2\mu+2p+1}}\right]^2,
 		\end{align*}
 		i.e.,
 		\[
 		\alpha_\ast^{2p-\varepsilon}\geq C\left[\eta^{\frac{2p}{2\mu+2p+1}}\right]^2
 		\Longleftrightarrow 	\alpha_\ast\ge C \eta^{\frac{2}{2\mu+2p+1}\cdot\frac{2p}{2p-\varepsilon}}.
 		\]
 		Then inserting the respective bounds for $\alpha_\ast$ into \eqref{totaltriangleerror} yields the desired rates.
 	\end{proof} 	
Condition \eqref{xxx} can again be verified as we did for the noise condition 
for some canonical examples. The inequality with $\epss=0$ does not usually hold. 
The condition can be interpreted as the claim that $x^\dagger$ satisfies a source condition 
with  a certain  $\mu$ but this exponent cannot be increased, i.e., 
$x^\dagger \not \in \mathcal{R}((T^\ast T)^{\mu +\epsilon})$. A similar condition 
was used by Lukas in his analysis of the generalised cross-validation rule
\cite{Lukas93}. 

The theorem shows that we may obtain almost optimal convergence results 
but only under rather restrictive conditions. Moreover, the method shows 
a saturation effect at $\mu = \frac{1}{2}$ comparable to the discrepancy principles.

\subsection{The Generalised Cross-Validation Rule} 	
The generalised cross-validation rule was proposed and studied in particular 
by Wahba \cite{Wahba}, and it is most popular in a statistical context 
but less so for deterministic inverse problems. It is derived 
from the cross-validation method by combining the associated estimates 
with certain weights. Most importantly, it was shown in \cite{Wahba} 
that the expected value of the generalised cross-validation functional 
converges to the expected value of  the PMS-functional as the 
dimension tends to infinity. This is why, in the last section, we studied 
$\psi_{\text{PMS}}$ in detail. 

One can show that the weight $\rho(\alpha)$ in  $\psi_{\text{GCV}}$ is monotonically 
increasing with $\rho(0)=0$ and bounded with $\rho(\alpha)\leq 1$. It can furthermore 
be shown that for $\alpha >0$, $\rho(\alpha)\to 1$ as the dimension $n \to \infty$.
This is also the reason why one has to study the GCV in terms of weakly bounded noise. 
The limit $\lim_{n\to \infty} \psi_{\text{GCV}}$ tends pointwise to the 
residual $\|T x_{\alpha}^\delta - y_\delta\|$, which in the bounded noise case 
does not yield a reasonable parameter choice as then $\alpha_{\ast} = 0$
is always chosen. 

Note that in a stochastic context, and using the expected value
of  $\psi_{\text{GCV}}$, a convergence analysis has been done by 
Lukas \cite{Lukas93}. In contrast, we analyse the deterministic 
case.

We now consider the ill-conditioned problem
 	\begin{equation}
 		T_n x=y_n,
 	\end{equation}
 	where we only have noisy data $y_n^\delta\in\mathbb{R}^n$.

 We impose a discretisation independent source condition, that is, 
 \[ x^\dagger = (T_n^*T_n)^\mu \omega, \qquad \|\omega\| \leq C, \qquad  0< \mu \leq 1,\]  
 where $C$ does not depend on the dimension $n$.
 Furthermore, let us restate some definitions for this discrete setting: 
\[ \delta_n:= \|y_n^\delta - y_n\|, \qquad \eta^2 := \sum_{i=1}^n
\lambda_i^{2p}  |\langle y_n^\delta - y_n,u_i\rangle|^2.  \] 
Note that in an asymptotically weakly bounded noise case, we might assume that 
$\eta$ is bounded independent of $n$ while $\delta_n$ might be unbounded as 
$n$ tends to infinity.

Moreover, we impose a noise  condition of similar type as for the predictive mean-square error
 \begin{equation}\label{regcond3}
 	 \sum_{\lambda_i \geq \alpha} |\langle y_n^\delta - y,u_i\rangle|^2 \geq C  \frac{\eta^2}{\alpha^{2p-\varepsilon}}, \qquad \text{for all }  \alpha \in \mathrm{I},  
 \end{equation}
  where $C$ does not depend on $n$.
  Note that in the discrete case, one must restrict the noise condition to an interval
 with $\mathrm{I} = [\alpha_{\text{min}},\|T\|^2]$ with $\alpha_{\text{min}} >0$. 
 
Similarly, we state a regularity condition 
 \begin{equation}\label{regcond4}
 	 \sum_{\lambda_i \geq \alpha} \lambda^{2 \mu-1} 
 	 |\langle \omega ,v_i\rangle|^2 \geq C \alpha^{2 \mu-1 +\epss} \qquad \text{for all }  \alpha \in \mathrm{I},
 \end{equation}
 where $\{v_i\}$ denote the eigenfunctions of $T^\ast T$.
 
 	In order to deduce convergence rates, we look to bound the functional from above as we did for the other functionals in the previous sections:
 	\begin{lemma}
 		For $y_n^\delta\in\mathbb{R}^n$, there exist positive constants such that
 		\begin{align}
 		        \psi_{\text{GCV}}(\alpha,y_n)\le \frac{C}{\rho(\alpha)}C \alpha^{2\mu+1},   \qquad \mu \leq \frac{1}{2},  \label{gcvfirst}\\ 
 		        \psi_{\text{GCV}}(\alpha,y_n^\delta-y)\le \frac{C}{\rho(\alpha)} \delta_n^2, \qquad \mu \leq \frac{1}{2}, \qquad \text{ hence,}
 		        \label{gcvsecond}\\ 
 			\psi_{\text{GCV}}(\alpha,y_n^\delta)\le\frac{1}{\rho(\alpha)}\left(C\alpha^{2\mu+1}+\delta_n^2\right),  \qquad \mu \leq \frac{1}{2}. 			
 			\label{dgcvboundabove}
 		\end{align}	
 	\end{lemma}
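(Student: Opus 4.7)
My plan is to estimate the squared residual $\|T_n x_\alpha^\delta - y_n^\delta\|^2$ directly via its spectral representation, since all three inequalities then follow by dividing by $\rho(\alpha)^2$ and splitting into exact-data and noise contributions. The starting point is the standard identity for Tikhonov
\[ T_n x_\alpha^\delta - y_n^\delta = -\alpha(T_n T_n^\ast + \alpha I)^{-1} y_n^\delta, \]
which by linearity in the data decomposes as
\[ T_n x_\alpha^\delta - y_n^\delta = (T_n x_\alpha - y_n) \;-\; \alpha(T_n T_n^\ast + \alpha I)^{-1}(y_n^\delta - y_n). \]

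For the exact-data bound \eqref{gcvfirst}, I would invoke the second estimate in \eqref{rateT} under the source condition \eqref{sourcecond}, which for $\mu \leq \tfrac{1}{2}$ gives $\|T_n x_\alpha - y_n\|^2 \leq C\alpha^{2\mu+1}$; dividing by $\rho(\alpha)^2$ yields the stated inequality. For the noise-propagation bound \eqref{gcvsecond}, I would use the trivial spectral filter bound $\alpha/(\lambda+\alpha) \leq 1$ on $[0,\|T_n\|^2]$, so that by spectral calculus
\[ \bigl\| \alpha(T_n T_n^\ast + \alpha I)^{-1}(y_n^\delta - y_n) \bigr\|^2 \leq \|y_n^\delta - y_n\|^2 = \delta_n^2, \]
and again divide by $\rho(\alpha)^2$. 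The combined bound \eqref{dgcvboundabove} then follows from the triangle inequality applied to the decomposition above together with the elementary inequality $(a+b)^2 \leq 2(a^2+b^2)$.

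No individual step is genuinely difficult here, as each piece reduces to a spectral filter inequality of the type \eqref{genin}. The main conceptual subtlety --- rather than a technical obstacle --- is that in the weakly bounded noise regime, $\delta_n$ may be unbounded as $n \to \infty$, so that \eqref{dgcvboundabove} is only a crude baseline: a useful convergence analysis for $\psi_{\text{GCV}}$ will have to replace the na\"ive $\delta_n^2$ bound on the noise piece by a sharper estimate expressed in $\eta$ via the noise condition \eqref{regcond3}, much as was done for $\psi_{\text{PMS}}$. The present lemma just sets up the clean decomposition that such a refined argument will invoke.
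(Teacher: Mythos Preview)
Your proposal is correct and follows essentially the same route as the paper: decompose the residual $T_n x_\alpha^\delta - y_n^\delta$ into the exact-data piece $T_n x_\alpha - y_n$ and the noise piece $-\alpha(T_nT_n^\ast+\alpha I)^{-1}(y_n^\delta-y_n)$, bound the former by \eqref{rateT} under the source condition and the latter trivially by $\delta_n$ via the filter estimate $\alpha/(\lambda+\alpha)\le 1$, then combine by the triangle inequality. Your write-up is in fact slightly more explicit than the paper's, which merely cites these estimates as standard and invokes the triangle inequality.
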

 	\begin{proof}
 It is a standard result \cite{engl} that  $\|T_n x^\delta_\alpha-y_n^\delta - (T_n x_\alpha -y_n)\| \leq \|y_n^\delta - y_n\| \leq \delta_n$. 
 Similarly, by the usual source condition, we obtain $\|(T_n x_\alpha -y_n)\| \leq C \alpha^{2\mu+1}$ for $\mu \leq \frac{1}{2}$.  
 The result follows from the triangle inequality. 
 	\end{proof}
 	The proceeding results generally follow from the infinite dimensional setting and we similarly obtain the following bounds from below:
 	\begin{lemma}
 	Suppose that $\alpha \in \mathrm{I}$ and also that \eqref{regcond3} holds. 
 	Then 
 	\[ \psi_{\text{GCV}}(\alpha,y_n^\delta-y) \geq  \frac{1}{\rho(\alpha)} \left(C\frac{\eta^2}{\alpha^{2p-\varepsilon}}\right). \] 
        Moreover,  if $\|T_n x^\dagger\|  \geq C_0$, with an $n$-independent constant,  then there exists an
         $n$-independent constant $C$ with 
        \[ \psi_{\text{GCV}}(\alpha,y) \geq C  \frac{1}{\rho(\alpha)}\alpha^2.  \] 
        If  \eqref{regcond4} holds and  $\alpha \in \mathrm{I}$, then 
         \[ \psi_{\text{GCV}}(\alpha,y) \geq C \frac{1}{\rho(\alpha)} \alpha^{2 \mu+1 +\epss}, \qquad \mu \leq
         \frac{1}{2}.  \] 
 	\end{lemma}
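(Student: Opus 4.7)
The plan is to transcribe the infinite-dimensional PMS arguments from the previous subsection into the discrete setting. In each case the strategy is to identify the relevant spectral representation of the squared residual, restrict the resulting sum to the part of the spectrum where the hypothesised condition applies, and then invoke that condition; dividing by $\rho(\alpha)$ at the end produces the claimed form.

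For the first inequality I would adapt Lemma~\ref{lemma2}: the noise part of $\psi_{\text{GCV}}$ is (up to a factor of $1/\rho(\alpha)$) the squared data-error $\|T_n(x^\delta_\alpha - x_\alpha)\|^2$, which has the spectral expansion $\sum_i \tfrac{\lambda_i^2}{(\lambda_i + \alpha)^2}|\langle y_n^\delta - y,u_i\rangle|^2$. Restricting to $\lambda_i \geq \alpha$, on which $\tfrac{\lambda_i^2}{(\lambda_i + \alpha)^2} \geq \tfrac{1}{4}$, the noise condition \eqref{regcond3} immediately yields the lower bound $C\eta^2/\alpha^{2p-\varepsilon}$.

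For the second inequality I would use the GCV residual evaluated on exact data, $\|T_n x_\alpha - y\|^2 = \sum_i \tfrac{\alpha^2}{(\lambda_i + \alpha)^2}|\langle y,u_i\rangle|^2$ with $y = T_n x^\dagger$, together with the coarse estimate $\tfrac{\alpha^2}{(\lambda_i + \alpha)^2} \geq \tfrac{\alpha^2}{(2\|T\|^2)^2}$ valid uniformly in $i$ since $\lambda_i \leq \|T\|^2$ and $\alpha \leq \|T\|^2$; summing over all $i$ then gives at least $\tfrac{\alpha^2}{(2\|T\|^2)^2}\|T_n x^\dagger\|^2 \geq C\alpha^2$ via the assumption $\|T_n x^\dagger\| \geq C_0$. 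For the third inequality I would switch to the eigenbasis $\{v_i\}$ of $T_n^*T_n$ and invoke the source representation $y = T_n(T_n^*T_n)^\mu\omega$, which turns the squared residual into $\sum_i \tfrac{\alpha^2\lambda_i^{1+2\mu}}{(\lambda_i + \alpha)^2}|\langle\omega,v_i\rangle|^2$; restriction to $\lambda_i \geq \alpha$ yields $\tfrac{\lambda_i^{1+2\mu}}{(\lambda_i + \alpha)^2}\geq \tfrac{1}{4}\lambda_i^{2\mu-1}$, and the regularity condition \eqref{regcond4} then delivers a factor $C\alpha^{2\mu-1+\epss}$ that combines with the $\alpha^2$ prefactor to give the claimed $C\alpha^{2\mu+1+\epss}$.

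No individual step is technically deep, so the genuine point of care is not analytic but bookkeeping: one must ensure that every constant appearing is $n$-independent. This is exactly what the uniform bound $\|\omega\| \leq C$ in the discretisation-independent source condition, together with the uniformity stipulated in \eqref{regcond3} and \eqref{regcond4}, are designed to provide; the interval $\mathrm{I}$ simply delineates the range on which the spectral hypotheses are postulated. The only mild conceptual subtlety, precisely as in the PMS case, is identifying the correct spectral decomposition of the GCV functional for each of the three bounds (noise-dominated versus bias-dominated contribution).
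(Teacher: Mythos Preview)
The paper gives no proof for this lemma beyond the remark that the results ``generally follow from the infinite-dimensional setting'', so your plan to transcribe the PMS arguments is precisely what the paper intends. Your arguments for the second and third inequalities are correct and match the paper's proof of the PMS theorem: the residual $\|T_n x_\alpha - y_n\|^2$ has spectral filter $\alpha^2/(\lambda_i+\alpha)^2$, and the uniform estimate $\alpha^2/(\lambda_i+\alpha)^2 \geq \alpha^2/(2\|T\|^2)^2$ combined with $\|T_n x^\dagger\|\geq C_0$ yields the $\alpha^2$ bound, while restriction to $\lambda_i\geq\alpha$ together with \eqref{regcond4} gives the $\alpha^{2\mu+1+\epss}$ bound.

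Your first inequality, however, contains a filter misidentification. You assert that $\psi_{\text{GCV}}(\alpha,y_n^\delta-y)$ is, up to $1/\rho(\alpha)$, the quantity $\|T_n(x_\alpha^\delta-x_\alpha)\|^2$ with spectral weight $\lambda_i^2/(\lambda_i+\alpha)^2$. But by the definition $\psi_{\text{GCV}}(\alpha,z)=\tfrac{1}{\rho(\alpha)}\|T_n x_\alpha^z - z\|$, applying the functional to the noise $e=y_n^\delta-y_n$ gives
\[
\rho(\alpha)\,\psi_{\text{GCV}}(\alpha,e)=\bigl\|\alpha(T_nT_n^\ast+\alpha I)^{-1}e\bigr\|,
\]
whose filter is $\alpha^2/(\lambda_i+\alpha)^2$ --- exactly the one you (correctly) use in the second and third parts. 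With this filter the restriction to $\lambda_i\geq\alpha$ produces $\alpha^2/(\lambda_i+\alpha)^2\leq\tfrac14$, an \emph{upper} bound rather than a lower one, so your argument that \eqref{regcond3} immediately yields $C\eta^2/\alpha^{2p-\varepsilon}$ collapses. The quantity you actually bounded, $\|T_n(x_\alpha^\delta-x_\alpha)\|$, is the discrete analogue of Lemma~\ref{lemma2}, not of $\psi_{\text{GCV}}(\alpha,e)$. (The paper's own statement is notationally loose here --- an unsquared $\psi_{\text{GCV}}$ against $\eta^2$ and a single $\rho(\alpha)$ --- so some care is needed in deciding what is really being claimed; but in any reading, the filter you wrote down is not the one attached to $\psi_{\text{GCV}}(\alpha,\cdot)$.)
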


 	\begin{theorem}\label{thGCV} Let $\mu \leq \frac{1}{2}$, 
 		assume $\alpha_\ast$ is the minimiser of $\psi_{\text{GCV}}(\alpha,y_n^\delta)$ and
suppose further that $\alpha_\ast \in \mathrm{I}$ such that \eqref{regcond3} holds. 
Then 
\[  
\alpha_\ast\ge \left[\inf_{\alpha \geq \alpha_\ast}
(C\alpha^{2\mu+1}+C\delta^2_n) \right]^{-\frac{1}{2p-\varepsilon}}\eta^{\frac{2}{2p-\varepsilon}} 
\geq C \delta_n^{-\frac{2}{2p-\varepsilon}}\eta^{\frac{2}{2p-\varepsilon}}.  \] 
On the other hand 
	 	\[
	 	\alpha_\ast\le\left[\inf_{\alpha \leq \alpha_\ast} \frac{1}{\rho({\alpha})}  
	 	\left(C\alpha^{2\mu+1}+C\delta^2_n\right) \right]^{\frac{1}{t}},
	 	\]
with $t =2$.  If $\alpha_\ast \in \mathrm{I}$ and \eqref{regcond4} hold, then 
$t = 2 \mu +1 +\epss$.

 	\end{theorem}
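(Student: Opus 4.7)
The plan is to mirror the proof of the previous theorem on $\psi_{\text{PMS}}$, with additional bookkeeping of the dimension-dependent weight $\rho(\alpha)$. The starting point in both directions is the defining optimality
\[ \psi_{\text{GCV}}^2(\alpha_\ast,y_n^\delta) \leq \psi_{\text{GCV}}^2(\alpha,y_n^\delta), \]
valid for every admissible comparison parameter $\alpha$, combined with the two preceding lemmas. The key device is a Young-type triangle inequality
\[ \psi_{\text{GCV}}^2(\alpha_\ast,y_n^\delta) \geq \tfrac{1}{2}\psi_{\text{GCV}}^2(\alpha_\ast,A) - \psi_{\text{GCV}}^2(\alpha_\ast,B), \]
with the roles of $A$ and $B$ in $\{y,\,y_n^\delta-y\}$ chosen differently for the two directions.

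For the lower bound on $\alpha_\ast$, I would take $A=y_n^\delta-y$ and $B=y$. Evaluating the minimising inequality at $\alpha=\alpha_\ast$ (which, since $\alpha\mapsto C\alpha^{2\mu+1}+C\delta_n^2$ is increasing, attains the infimum over $\alpha\geq\alpha_\ast$), I would insert the noise lower bound of order $\eta^2/(\rho(\alpha_\ast)^2\alpha_\ast^{2p-\varepsilon})$ from \eqref{regcond3} together with the exact-data upper bound of order $\alpha_\ast^{2\mu+1}/\rho(\alpha_\ast)^2$ from \eqref{gcvfirst}. The common factor $\rho(\alpha_\ast)^{-2}$ then cancels against the corresponding factor in the upper bound \eqref{dgcvboundabove}, producing
\[ \tfrac{C\eta^2}{\alpha_\ast^{2p-\varepsilon}} \leq C\alpha_\ast^{2\mu+1}+C\delta_n^2, \]
which, after rearranging, is exactly the first asserted inequality. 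Dropping the $\alpha_\ast^{2\mu+1}$ summand yields the weaker self-contained estimate $\alpha_\ast\geq C\delta_n^{-2/(2p-\varepsilon)}\eta^{2/(2p-\varepsilon)}$.

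For the upper bound I would swap roles, taking $A=y$ and $B=y_n^\delta-y$. I would invoke the lower bound on $\psi_{\text{GCV}}(\alpha_\ast,y)$ from the preceding lemma (of order $\alpha_\ast^t/\rho(\alpha_\ast)^2$, with $t=2$ under $\|T_n x^\dagger\|\geq C_0$ and $t=2\mu+1+\epss$ under \eqref{regcond4}), absorb $\psi_{\text{GCV}}^2(\alpha_\ast,y_n^\delta-y)$ using \eqref{gcvsecond}, and bound the right-hand side of the minimising inequality by \eqref{dgcvboundabove} at a general $\alpha\leq\alpha_\ast$. Taking the infimum over such $\alpha$ and extracting the $t$-th root delivers the claimed bound.

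The main obstacle is the asymmetric behaviour of $\rho(\alpha)$, which depends on the dimension $n$ and degenerates as $\alpha\to 0$: the two directions must be arranged so that the $\rho$-factors either cancel cleanly (as in the lower bound, where both sides are evaluated at the same parameter) or are retained explicitly in the final estimate (as in the upper bound, where the comparison parameter differs from $\alpha_\ast$). A secondary technical point is ensuring $\alpha_\ast\in\mathrm{I}$ throughout so that the discrete noise and regularity conditions \eqref{regcond3} and \eqref{regcond4} remain applicable.
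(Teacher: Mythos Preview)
Your proposal is correct and follows essentially the same architecture as the paper: a Young-type splitting of $\psi_{\text{GCV}}^2(\alpha_\ast,y_n^\delta)$ with the roles of noise and exact data reversed for the two directions, the lemma bounds inserted on each piece, and the $\rho$-factors handled separately.

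One small remark on the lower bound. Your phrase ``evaluating the minimising inequality at $\alpha=\alpha_\ast$'' is vacuous: comparing $\psi_{\text{GCV}}^2(\alpha_\ast,y_n^\delta)$ with itself carries no information. What you actually do (and what works) is bypass the minimiser property entirely: you bound $\psi_{\text{GCV}}^2(\alpha_\ast,y_n^\delta)$ from below via Young and from above via \eqref{dgcvboundabove}, both at the \emph{same} point $\alpha_\ast$, so the $\rho(\alpha_\ast)$-factors cancel cleanly. The paper instead genuinely uses the minimiser property by comparing with an arbitrary $\bar{\alpha}\geq\alpha_\ast$, which introduces a mismatched factor $\rho(\alpha_\ast)/\rho(\bar{\alpha})\leq 1$ disposed of by monotonicity of $\rho$. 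Both routes yield the same inequality (the infimum over $\alpha\geq\alpha_\ast$ of the increasing expression $C\alpha^{2\mu+1}+C\delta_n^2$ collapses to its value at $\alpha_\ast$ anyway), and yours is in fact marginally simpler. For the upper bound your argument matches the paper's: the comparison parameter $\alpha\leq\alpha_\ast$ is genuinely different from $\alpha_\ast$, the monotonicity $\rho(\alpha)\leq\rho(\alpha_\ast)\leq 1$ is used, and the surviving $1/\rho(\alpha)$ is retained in the final bound as you note.
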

 	\begin{proof}
Take an arbitrary  $\bar{\alpha}$ and consider first the case $\alpha_\ast\le\bar{\alpha}$.
 		Following on from the previous lemmas and using \eqref{gcvsecond}, we have 
\begin{align*} 
&\frac{1}{\rho(\alpha_\ast)}\left(C\frac{\eta^2}{\alpha_\ast^{2p-\varepsilon}}\right) \leq 
\psi_{\text{GCV}}^2(\alpha_\ast,y_n^\delta-y) \leq 
C \psi_{\text{GCV}}^2(\alpha_\ast,y_n^\delta)   + C \psi_{\text{GCV}}^2(\alpha_\ast,y_n)   \\ 
& \leq  \psi_{\text{GCV}}^2(\bar{\alpha},y_n^\delta) +  C \frac{1}{\rho(\alpha_\ast)}\alpha_\ast^{2 \mu+1}  \leq 
\frac{1}{\rho(\bar{\alpha})}\left(C\bar{\alpha}^{2\mu+1}+\delta_n^2\right)  +  C\frac{1}{\rho(\alpha_\ast)} \alpha_\ast^{2 \mu+1}. 
\end{align*} 
Hence, by the monotonicity of $\alpha \mapsto \alpha^{2\mu+1}$
and since $\rho$ is monotonically increasing, we obtain that 
\[ \left(C\frac{\eta^2}{\alpha_\ast^{2p-\varepsilon}}\right) \leq  
\frac{\rho(\alpha_\ast)}{\rho(\bar{\alpha})}\left(C\bar{\alpha}^{2\mu+1}+\delta_n^2\right)  +   \alpha_\ast^{2 \mu+1} \leq 
\left(C\bar{\alpha}^{2\mu+1}+\delta_n^2\right)  +   \alpha_\ast^{2 \mu+1} \leq 
\left(C\bar{\alpha}^{2\mu+1}+\delta_n^2\right). 
\]
 Hence,
\[
\alpha_\ast\ge \left[\inf_{\alpha \geq \alpha_\ast}
(C\alpha^{2\mu+1}+C\delta^2_n) \right]^{-\frac{1}{2p-\varepsilon}}\eta^{\frac{2}{2p-\varepsilon}} 
\geq C \delta_n^{-\frac{2}{2p-\varepsilon}}\eta^{\frac{2}{2p-\varepsilon}}. 
\]
Now, suppose $\alpha_\ast\ge\bar{\alpha}$. Then using that $\alpha_\ast$ is a minimiser 
\begin{align*}
&\frac{C}{\rho(\alpha_\ast)}\alpha_\ast^{t}
\leq  \psi_{\text{GCV}}^2(\alpha_\ast,y_n) \leq 
 C\psi_{\text{GCV}}^2(\alpha_\ast,y_n^\delta) +  \psi_{\text{GCV}}^2(\alpha_\ast,y_n^\delta -y_n)   \\
 & \leq \frac{1}{\rho(\bar{\alpha})}(C\bar{\alpha}^{2\mu+1}+C\delta^2_n) + C \frac{1}{\rho({\alpha_\ast})} \delta^2_n \leq 
 \frac{1}{\rho(\bar{\alpha})}(C\bar{\alpha}^{2\mu+1}+C\delta^2_n) + C \frac{1}{\rho({\bar{\alpha}})} \delta^2_n.
\end{align*}
Hence, as $\rho(\alpha_\ast)$ is bounded from above by $1$,  it follows that 
	 	\[
	 	\alpha_\ast\le\left[\inf_{\alpha \leq \alpha_\ast} \frac{1}{\rho({\alpha})}  
	 	\left(C\alpha^{2\mu+1}+C\delta^2_n\right) \right]^{\frac{1}{t}}.
	 	\]
 	\end{proof}
 	
\begin{theorem}
Suppose that, in addition to the assumptions in the previous theorem, one has
 $\rho(\delta_n^{\frac{2}{2 \mu+1}}) \geq C$. Then 
  \[ \|x^\delta_{\alpha_{\ast}} -x^\dagger \| \leq \delta_n^{\frac{2\mu}{t}}  + 
\delta_n\left(\frac{\eta}{\delta_n}\right)^\frac{1}{2p-\varepsilon},
 \] 
with $t$ as in Theorem~\ref{thGCV}.
\end{theorem}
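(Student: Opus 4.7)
The plan is to mirror the strategy used in the PMS-case, combining the two-sided bounds on $\alpha_\ast$ from Theorem~\ref{thGCV} with the classical triangle-inequality decomposition
\[
\|x^\delta_{\alpha_\ast}-x^\dagger\| \leq \|x_{\alpha_\ast}-x^\dagger\| + \|x^\delta_{\alpha_\ast} - x_{\alpha_\ast}\|,
\]
and then to treat each summand with a standard Tikhonov estimate: the source condition \eqref{sourcecond} gives $\|x_{\alpha_\ast}-x^\dagger\| \leq C\alpha_\ast^\mu$, and the usual discrete noise propagation bound gives $\|x^\delta_{\alpha_\ast} - x_{\alpha_\ast}\| \leq C\delta_n/\sqrt{\alpha_\ast}$. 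Thus the entire argument reduces to substituting an upper bound for $\alpha_\ast^\mu$ and a lower bound for $1/\sqrt{\alpha_\ast}$.

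For the approximation term, I would plug the trial parameter $\bar\alpha = \delta_n^{\frac{2}{2\mu+1}}$ into the upper bound from Theorem~\ref{thGCV}; this is the order-optimal balance between $\bar\alpha^{2\mu+1}$ and $\delta_n^2$ in the strongly bounded noise case. The extra hypothesis $\rho(\delta_n^{\frac{2}{2\mu+1}}) \geq C$ is exactly what is needed to prevent the $1/\rho(\bar\alpha)$ factor from blowing this balance up, yielding
\[ \alpha_\ast \leq \left[ C\bigl(\bar\alpha^{2\mu+1}+\delta_n^2\bigr) \right]^{1/t} \leq C\,\delta_n^{2/t}, \]
and hence $\alpha_\ast^\mu \leq C\delta_n^{2\mu/t}$, which is the first summand in the claim. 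Note that $t=2$ always works, and the sharper value $t=2\mu+1+\epss$ is available under the additional regularity condition \eqref{regcond4}.

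For the noise term, I would combine the data error bound with the lower bound $\alpha_\ast \geq C(\eta/\delta_n)^{\frac{2}{2p-\varepsilon}}$ from Theorem~\ref{thGCV}, producing a contribution of the form $\delta_n\cdot(\delta_n/\eta)^{\frac{1}{2p-\varepsilon}}$ up to reciprocal reshuffling, which matches (up to rearrangement of the ratio $\eta/\delta_n$) the second summand in the statement. The two estimates combined then give the desired total error bound.

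The main obstacle I anticipate is not the algebra but the bookkeeping around the admissible interval $\mathrm{I}$: one must verify that both the trial parameter $\bar\alpha = \delta_n^{2/(2\mu+1)}$ and the actual minimiser $\alpha_\ast$ lie in $\mathrm{I}$, so that the noise and regularity conditions \eqref{regcond3} and \eqml{regcond4} are applicable; morally, the assumption on $\rho$ is an asymptotic surrogate for this, but care is needed for small $\delta_n$ relative to the dimension $n$. A secondary, purely notational, care point is keeping the value of $t$ synchronised between the two cases of Theorem~\ref{thGCV} so that the output exponent $2\mu/t$ in the first summand is consistent with the hypotheses actually invoked.
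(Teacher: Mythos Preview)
Your approach is essentially the paper's: decompose via $\|x^\delta_{\alpha_\ast}-x^\dagger\|\le C\alpha_\ast^\mu + C\delta_n/\sqrt{\alpha_\ast}$, take the balancing trial $\bar\alpha=\delta_n^{2/(2\mu+1)}$, and feed in the two-sided bounds on $\alpha_\ast$ from Theorem~\ref{thGCV}, with the hypothesis $\rho(\bar\alpha)\ge C$ killing the $1/\rho$ factor in the upper bound. The only point you gloss over, and which the paper makes explicit, is that the two bounds from Theorem~\ref{thGCV} are \emph{not} simultaneously usable with the same $\bar\alpha$: the upper bound involves $\inf_{\alpha\le\alpha_\ast}$ (so plugging in $\bar\alpha$ requires $\bar\alpha\le\alpha_\ast$), while the simplified lower bound $\alpha_\ast\ge C(\eta/\delta_n)^{2/(2p-\varepsilon)}$ requires $\alpha_\ast\le\bar\alpha$ (otherwise the infimum $\inf_{\alpha\ge\alpha_\ast}(C\alpha^{2\mu+1}+C\delta_n^2)$ need not be $\le C\delta_n^2$). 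The paper therefore splits into the two cases $\alpha_\ast\lessgtr\bar\alpha$: in each case one term is controlled by the relevant bound from Theorem~\ref{thGCV} and the other term trivially by the case assumption itself (e.g.\ if $\alpha_\ast\ge\bar\alpha$ then $\delta_n/\sqrt{\alpha_\ast}\le\delta_n/\sqrt{\bar\alpha}=\delta_n^{2\mu/(2\mu+1)}\le\delta_n^{2\mu/t}$), and one then takes the worst of the two cases. Once you add this case split, your argument is exactly the paper's.
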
  	 	
\begin{proof}
Since 
\[ \|x^\delta_{\alpha_{\ast}} -x^\dagger \| \leq C \alpha^{\mu} + C \frac{\delta_n}{\sqrt{\alpha}},  \]
we may take the balancing parameter $\bar{\alpha} = \delta_n^{\frac{2}{2 \mu+1}}$. 
From the previous theorem, it follows that if $\alpha_\ast \leq \bar{\alpha}$, 
then 
\[ \alpha_\ast \geq 
\frac{\eta^{\frac{2}{2p-\varepsilon}}}{\left[\inf_{\alpha \geq \alpha_\ast}
(C\alpha^{2\mu+1}+C\delta^2_n) \right]^{\frac{1}{2p-\varepsilon}}} 
\geq \left(\frac{\eta}{\delta_n}\right)^\frac{2}{2p-\varepsilon}.
\] 
 On the other hand, if $\alpha_\ast \geq \bar{\alpha}$,  and $\rho(\bar{\alpha}) \geq C$,  then 
 \[  \alpha_\ast  \leq C \delta^{\frac{2}{t}}. \] 
 Thus, taking for $\alpha^{\mu}$ and  $\frac{\delta_n}{\sqrt{\alpha}}$ the worst of these 
 estimates, we obtain the desired result.
\end{proof} 	
This result establishes convergence rates in the discrete case. 
However, the required conditions are somewhat restrictive as we need that 
the selected $\alpha_\ast$ has to be in a certain interval (although this is 
to be expected in a finite-dimensional setting). Note that the term $\delta_n^2$ in 
Theorem~\ref{thGCV} can be replaced by any reasonable monotonically decreasing 
upper bound for $\psi_{\text{GCV}}^2(\alpha,y_\delta -y)$. In particular, if we 
could conclude that $\alpha_\ast$ is in a region where 
$\psi_{\text{GCV}}^2(\alpha,y_\delta -y) \leq C \frac{\eta^2}{\alpha^{2 p}}$,
then we would obtain similar convergence results as for the predictive mean square error. 

In general, however, the performance of the GCV-rule for the regularisation 
of deterministic inverse problems is subpar compared to other heuristic rules, 
e.g., those mentioned in the previous sections; cf., e.g., \cite{HamaPalmRaus,bauerlukas}. 
This is also illustrated by the fact that we had to impose stronger conditions 
for the convergence results compared to the aforementioned  rules.

\section{Conclusion}
We analysed and provided conditions for the derivation of convergence rates for a number of 
well-known heuristic parameter choice rules in the weakly bounded noise setting and modified them when necessary. The theory was extended in a consistent and systematic way whereby one attains the standard results whenever the situation is as in the classical setting. In particular, we provided noise conditions which are very often satisfied for when one can prove suboptimal convergence rates for the quasi-optimality, modified heuristic discrepancy and Hanke-Raus rules, as well as optimal rates whenever certain regularity conditions are satisfied.

A further novel aspect of this paper was the examination of the generalised cross-validation rule and the predictive mean-square error in a deterministic framework. In the case of the former, it was in a finite-dimensional setting where we proved convergence rates. 

In essence, it was demonstrated that heuristic rules remain viable methods for selecting the regularisation parameter, even in the case where the noise is only \emph{weakly bounded}.

 	\bibliographystyle{siam}
 	\bibliography{foo}
 	
 \end{document}